\providecommand{\U}[1]{\protect\rule{.1in}{.1in}}
\def\theenumi{\arabic{enumi}}
\def\theenumii{\alph{enumii}}
\def\p@enumii{\theenumi.}
\def\theenumiii{\arabic{enumiii}}
\def\p@enumiii{(\theenumi)(\theenumii)}
\def\p@enumiv{\p@enumiii.\theenumiii}
\theoremstyle{plain}
\newtheorem{theorem}{Theorem}[section]
\newtheorem{lemma}[theorem]{Lemma}
\newtheorem{proposition}[theorem]{Proposition}
\newtheorem{corollary}[theorem]{Corollary}
\numberwithin{equation}{section}
\theoremstyle{definition}
\newtheorem{definition}[theorem]{Definition}
\newtheorem{example}[theorem]{Example}
\newtheorem{remark}[theorem]{Remark}
\newtheorem{thmab}{Theorem}
\renewenvironment{proof}[1][\proofname]{{\bfseries #1\\}}{\qed}
\DeclareMathOperator{\Hom}{Hom}
\DeclareMathOperator{\Tor}{Tor}
\DeclareMathOperator{\HD}{HD}
\DeclareMathOperator{\td}{td}
\DeclareMathOperator{\CMreg}{CMreg}
\DeclareMathOperator{\coker}{coker}
\DeclareMathOperator{\Ext}{Ext}
\DeclareMathOperator{\Res}{Res}
\DeclareMathOperator{\Ind}{Ind}
\newcommand{\Sn}{\mathfrak{S}}
\newcommand{\C}{{\mathscr{C}}}
\newcommand{\Zn}{\mathbb{Z}}
\newcommand{\N}{\mathbb{N}}
\newcommand{\FI}{{\mathscr{FI}}}
\newcommand{\mi}{\mathfrak{m}}
\newcommand{\filcom}{\mathcal{C}^\bullet}
\newcommand{\fiHom}{\mathscr{H}om}
\newcommand{\fiExt}{{\mathscr{E}xt}}
\newcommand{\arXiv}[1]{\href{http://arxiv.org/abs/#1}{\nolinkurl{arXiv:#1}}}
\newcommand{\arXivV}[2]{\href{http://arxiv.org/abs/#1}{\nolinkurl{arXiv:#1v#2}}}
\title{Local cohomology and the multi-graded regularity of $\FI^m$-modules}
\author{Liping Li and Eric Ramos}
\address{HPCSIP (Ministry of Education), College of Mathematics and Computer Science, Hunan Normal University, Changsha, Hunan 410081, China.}
\email{lipingli@hunnu.edu.cn}
\address{Department of Mathematics, University of Michigan - Ann Arbor.}
\email{eramos@math.wisc.edu}
\thanks{L. Li is supported by the National Natural Science Foundation of China 11771135 and the Start-Up Funds of Hunan Normal University 830122-0037. E. Ramos was supported by NSF grant DMS-1704811.}
\begin{document}

\begin{abstract}
We develop a local cohomology theory for $\FI^m$-modules, and show that it in many ways mimics the classical theory for multi-graded modules over a polynomial ring. In particular, we define an invariant of $\FI^m$-modules using this local cohomology theory which closely resembles an invariant of multi-graded modules over Cox rings defined by Maclagan and Smith. It is then shown that this invariant behaves almost identically to the invariant of Maclagan and Smith.
\end{abstract}

\maketitle

\section{Introduction}

There has been a recent boom in the literature of what one might call the representation theory of infinite combinatorial categories. If $k$ is a commutative ring, and $\C$ is a small category, then a \emph{representation of $\C$}, or a \emph{$\C$-module}, is a functor from $\C$ to the category of $k$-modules. One particular category whose representation theory has garnered a tremendous amount of interest is $\FI$, the category of finite sets and injections. This is largely due to the seminal work of Church, Ellenberg, and Farb \cite{CEF}, who showed that $\FI$ has deep connections to representation stability theory \cite{CF}. This has lead to an outpouring of results applying $\FI$-modules to a variety of different subjects (see \cite{CE,CEFN,CMNR,N2} and the references therein).\\

While much of the original interest in studying $\FI$-modules was based on newly discovered applications to fields such as topology and number theory, there has also been a push to understand these objects from a more traditional representation theoretic perspective (see \cite{G,GL,LY,R,SS} for a small sampling of such results). Previous work of the authors \cite{LR} was focused on how the representation theory of $\FI$-modules closely resembled the very classical study of graded modules over polynomial rings. In particular, it was shown that one could make sense of a \emph{local cohomology} theory for $\FI$-modules, extending previous work of Sam and Snowden \cite{SS2}, and that this theory closely resembled the local cohomology theory of graded modules over polynomial rings.\\

Using $\FI$-modules as a launching point, it has recently become interesting to study representations of other categories which arise naturally in various contexts. Just as with $\FI$-modules, these other categories are now being considered from representation theoretic points of view (see \cite{G,W,LY2,N}). In this paper, we will be considering the representation theory of the category $\FI^m$, the $m$-fold product of $\FI$ with itself. The primary objective of this paper is to show that while the representation theory of $\FI$ closely resembled the study of graded modules over a polynomial ring, $\FI^m$-modules more closely resemble \emph{multi-graded} modules over polynomial rings.\\

Let $V$ denote an $\FI^m$-module. For a given vector $\mathbf{n} = (n_1,\ldots,n_m) \in \N^m$, we will often write $V_\mathbf{n}$ for the evaluation of $V$ at $[n_1] \times [n_2] \times \ldots \times [n_m]$. The \emph{transition maps} of $V$ are the maps induced by the non-invertible morphisms of $\FI^m$. A \emph{$B$-torsion element} of $V_{\mathbf{n}}$ is any element $v \in V_{\mathbf{n}}$ which is in the kernel of some transition map originating from $V_{\mathbf{n}}$. The $B$-torsion functor is that which sends an $\FI^m$-module to its maximal $B$-torsion submodule, while the \emph{local cohomology functors} are the derived functors of this $B$-torsion functor. We denote these functors by $H^i_B(\bullet)$.\\

Once one has established a local cohomology theory, one of the first questions one might ask is whether the local cohomology modules of a given module are in any way related to syzygies of that module. That is, are the local cohomology modules in any way related to the \emph{regularity} of the given module. Such a relation was first noted in the case of $\FI$-modules in \cite{LR}, and later expanded upon by Nagpal, Sam, and Snowden \cite{NSS}. For a variety of reasons, answering this fundamental question becomes considerably more subtle when $m > 1$. Our approach is designed to closely resemble the notion of regularity first introduced by Maclagan and Smith \cite{MS}. In particular, we treat regularity as a set, as opposed to a single value.\\

Let $V$ be an $\FI^m$-module. Then the \emph{positive Castelnuovo-Mumford regularity} of $V$ is the set of vectors $\mathbf{r} \in \Zn^m$ satisfying:
\[
H^i_B(V)_{\mathbf{n}} = 0 \text{ whenever } \mathbf{n} \in \cup_{(a_1,\ldots,a_m)}\cup_j(\mathbf{r}-(a_1,\ldots,a_m) + e_j + \N^m).
\]
Here, $i \geq 0$, the outer union is over vectors $(a_1,\ldots,a_m) \in \N^m$ with $\sum_j a_j = i$, and $e_j$ is the usual standard basis vector. Note that this definition is almost identical to the definition of \emph{multi-graded regularity} given by Maclagan and Smith for multi-graded modules over Cox rings in \cite{MS}. The main theorem of this paper is also almost identical to the main theorem of that work.\\

We denote the positive Castelnuovo-Mumford regularity of $V$ by $\CMreg_+(V) \subseteq \Zn^m$.\\

\begin{thmab}\label{reglc}
Let $V$ be a finitely generated $\FI^m$-module (see Definition \ref{fg}) over a Noetherian ring $k$. Then for all vectors $\mathbf{c}$ with positive integral coordinates, and for all $\mathbf{r} \in \CMreg_+(V) \cap \HD_0(V)$ (see Definition \ref{homdegree}) there exists a complex
\[
\ldots \rightarrow F_1 \rightarrow F_0 \rightarrow V \rightarrow 0
\]
such that:
\begin{enumerate}
\item $F_i$ is an \emph{induced module} (see Definition \ref{induced}) generated in degree $\mathbf{r} + i\cdot \mathbf{c}$;
\item the homology of this complex is $B$-torsion.\\
\end{enumerate}
\end{thmab}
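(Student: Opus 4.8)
The plan is to mimic the proof strategy of Maclagan–Smith's main theorem, adapting it to the combinatorial setting of $\FI^m$-modules. The key insight should be that the positive Castelnuovo–Mumford regularity, as defined via vanishing of local cohomology, controls the generation degree of a module up to $B$-torsion. I would start by recalling the Grothendieck-type spectral sequence relating local cohomology $H^i_B(V)$ to the syzygies of $V$ (this is the $\FI^m$-analogue of the local duality or the hypercohomology spectral sequence used in the graded case). Specifically, for a finitely generated $\FI^m$-module $V$, there should be a (possibly free, possibly induced) resolution whose terms $F_i$ are induced modules, and the local cohomology of $V$ can be computed from a Koszul-type complex built out of these induced modules; the convergence of the resulting spectral sequence translates vanishing of $H^i_B(V)$ in a shifted cone into vanishing of certain $\Tor$ or $\Ext$ groups.

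**The first main step** would be to handle the base case: given $\mathbf{r} \in \CMreg_+(V) \cap \HD_0(V)$, I want to produce a surjection $F_0 \to V \to 0$ with $F_0$ induced and generated in degree $\mathbf{r}$, such that the kernel becomes $B$-torsion after appropriate localization, or more precisely such that the cokernel in the relevant truncated category vanishes. Since $\mathbf{r} \in \HD_0(V)$, the module $V$ is generated in degrees $\leq \mathbf{r}$ away from torsion, so one can choose $F_0$ to surject onto $V$ modulo torsion; the condition $\mathbf{r} \in \CMreg_+(V)$ (which controls $H^0_B$ in the cone $\mathbf{r} + e_j + \N^m$) should guarantee that $F_0$ can actually be taken to be generated exactly in degree $\mathbf{r}$ and that the kernel $K_0$ of $F_0 \to V$ has controlled regularity — specifically I would want to show $\mathbf{r} + \mathbf{c} \in \CMreg_+(K_0) \cap \HD_0(K_0)$. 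This last claim is where the definition of regularity-as-a-cone does its work: the shift by $e_j$ in the definition, combined with the positivity of the coordinates of $\mathbf{c}$, provides exactly the slack needed when passing through the short exact sequence $0 \to K_0 \to F_0 \to V \to 0$ and invoking the long exact sequence in local cohomology together with the known (vanishing) local cohomology of the induced module $F_0$.

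**The inductive step** then iterates this construction: having built $F_0, \ldots, F_{i-1}$ and identified the $i$-th syzygy (mod torsion) $K_{i-1}$ with $\mathbf{r} + i\cdot\mathbf{c} \in \CMreg_+(K_{i-1}) \cap \HD_0(K_{i-1})$, apply the base case to $K_{i-1}$ to produce $F_i$ induced and generated in degree $\mathbf{r} + i\cdot\mathbf{c}$, with the homology at each stage being $B$-torsion by construction. The bookkeeping requires tracking how both invariants $\CMreg_+$ and $\HD_0$ shift under taking syzygies; the role of $\mathbf{c}$ having strictly positive coordinates is to ensure that the successive cones $\mathbf{r} + i\cdot\mathbf{c} + \N^m$ are genuinely nested and shrinking fast enough that the regularity condition survives each application of the long exact sequence. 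I would extract a key lemma stating: if $\mathbf{r} \in \CMreg_+(V)$ and $F_0 \to V$ is induced generated in degree $\mathbf{r}$ (surjective mod torsion) with kernel $K$, then $\mathbf{r} + \mathbf{c} \in \CMreg_+(K)$ for any $\mathbf{c}$ with positive coordinates — this lemma is the technical heart.

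**The main obstacle** I anticipate is precisely controlling the local cohomology of the induced modules $F_i$ and showing that the long exact sequence does not destroy the regularity condition as one passes to syzygies. In the $m=1$ case (the work of \cite{LR, NSS}) the combinatorics are linear and one-dimensional, but for $m>1$ the cone $\bigcup_{(a_1,\ldots,a_m),\, j}(\mathbf{r} - (a_1,\ldots,a_m) + e_j + \N^m)$ is a genuinely multi-dimensional region, and intersecting these regions across the long exact sequence — where the index $i$ increments and the cohomological degree shifts — requires a careful inclusion argument among such cones. The positivity of $\mathbf{c}$ is the mechanism that makes these inclusions hold, but verifying it rigorously, and verifying that induced modules have local cohomology vanishing in a large enough cone (ideally $H^i_B$ of an induced module generated in degree $\mathbf{d}$ vanishes for $i>0$ entirely, and $H^0_B = 0$ as well since induced modules are torsion-free), will demand a careful analysis of the structure of induced modules under the $B$-torsion functor, likely using that induced modules restrict nicely along the product structure of $\FI^m = \FI \times \cdots \times \FI$ and applying a Künneth-type decomposition for local cohomology.
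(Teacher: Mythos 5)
Your overall architecture (iterated syzygy construction, induced covers in prescribed degrees, acyclicity of induced modules, and cone inclusions powered by the positivity of $\mathbf{c}$) matches the shape of the paper's argument, and the $\CMreg_+$ half of your key lemma is sound: since $H^i_B(F_0)=0$ for all $i$, the long exact sequence gives $H^{i+1}_B(K)\cong H^i_B(\mathrm{im}(F_0\to V))$, and the inclusion of the shifted cones follows by writing $\mathbf{a}=\mathbf{a}'+e_j$ with $e_j\leq \mathbf{c}$ --- exactly the manipulation the paper performs in the proof of Theorem \ref{fullversion}.

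The genuine gap is the other half of your key lemma, namely $\mathbf{r}+\mathbf{c}\in \HD_0(K)$. Local cohomology cannot detect generation degrees (semi-induced modules are local cohomology acyclic, Theorem \ref{acyclicclass}), so no amount of long-exact-sequence bookkeeping in $H^\bullet_B$ will tell you that the syzygy $K$ is generated, modulo $B$-torsion, in degree $\mathbf{r}+\mathbf{c}$; yet without this you cannot choose $F_1$ generated in that degree with $B$-torsion cokernel, and the induction stalls at the first step. You gesture at a ``Grothendieck-type spectral sequence relating $H^i_B(V)$ to syzygies,'' but this is precisely the bridge that must be built, and for $m>1$ it is the delicate point: homology is $\Tor$ against $k\FI^m/\mi$ while local cohomology is taken with respect to $B\neq\mi$, so no off-the-shelf local duality applies (the paper explicitly flags this discrepancy). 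The paper supplies the bridge as Theorem \ref{fullversion}: if $\mathbf{c}_i-\mathbf{a}\in\CMreg_+(V)$ for all $\mathbf{a}\in\N^m$ with $|\mathbf{a}|=i$, then $\mathbf{c}_i\in\HD_i(V)$. Its proof needs ingredients absent from your outline: the Koszul complex computation of $\FI^m$-homology (Theorem \ref{koszulhom}), the separate treatment of $B$-torsion modules (Proposition \ref{tortrivial} and Theorem \ref{torversion}), and an induction on the top nonvanishing local cohomology degree using the sequences $0\to V_T\to V\to V_F\to 0$ and $0\to V_F\to \Sigma_{\mathbf{a}}V_F\to Q\to 0$, with simultaneous dimension shifting $H^s_B(Q)\cong H^{s+1}_B(V_F)$ and $H_i(V_F)\cong H_{i+1}(Q)$. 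Once that theorem is in hand, the vanishing $H_{i+1}(V)_{\mathbf{n}}=0$ for $\mathbf{n}$ at least one positive step from $\mathbf{r}+(i+1)\mathbf{c}$ is what makes the construction of Theorem \ref{torversion} go through in general, which is how the paper deduces Theorem \ref{reglc}; your proposal, by tracking $\CMreg_+$ of the syzygies instead of $\HD_\bullet$ of $V$, presupposes that missing implication rather than proving it.
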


\begin{remark}
Induced modules are analogous to free modules, and may usually be thought of as such to develop intuition. For instance, it is a fact that all projective $\FI^m$-modules are induced. Despite this, these modules are also local cohomology acyclic (see Theorem \ref{acyclicclass}). As a consequence, it is not possible for local cohomology to detect the generating degrees of modules. This justifies why the above theorem explicitly picks vectors from $\HD_0(V)$. One may therefore think of the above theorem as stating that while local cohomology cannot detect generating degree, it can detect all higher syzygies up to $B$-torsion.\\
\end{remark}

This paper is structured as follows. We begin by recalling:\\
\begin{itemize}
\item basic definitions from the study of $\FI^m$-modules (Section 2.1);
\item the shift functors and their relationship to torsion (Section 2.2);
\item $\FI$-module homology and homological degrees (Section 2.3).\\
\end{itemize}

Following these background sections, we turn our attention to developing the local cohomology theory of $\FI^m$-modules (Section 3). While this work is largely novel, it also closely resembles the theory for $\FI$-modules and should not be considered the major contribution of this work. We conclude the paper by discussing Castelnuovo-Mumford regularity, and performing the proof of Theorem \ref{reglc} (Section 4).\\

\section{Background}

\subsection{Basic definitions}

\begin{definition}\label{fg}
The category $\FI$ is that whose objects are the finite sets $[n] := \{1,\ldots,n\}$ and whose morphisms are injections. For a fixed positive integer $m$, we define $\FI^m$ to be the $m$-fold categorical product of $\FI$ with itself. Namely, the objects of $\FI^m$ are tuples $[n_1] \times \ldots \times [n_m]$, while the morphisms are given by products $f_1 \times \ldots f_m$ where for each $j$, $f_j:[n_j] \rightarrow [r_j]$ is an injection. Given an integral vector $\mathbf{n} = (n_1,\ldots,n_m) \in \N^m$, where $\N$ is the set of nonnegative integers and $[0] = \emptyset$ by convention, we will write $[\mathbf{n}]$ as a shorthand for $[n_1] \times \ldots \times [n_m]$. Similarly, we write $\Sn_{\mathbf{n}}$ for the product of symmetric groups $\Sn_{n_1} \times \ldots \times \Sn_{n_m}$\\

For a fixed Noetherian commutative ring $k$, an \textbf{$\FI^m$-module over $k$} is a covariant functor from $\FI^m$ to the category of $k$-modules. often times, when the ring $k$ is understood, we will refer to $\FI^m$-modules over $k$ as simply being $\FI^m$-modules. Given an $\FI^m$-module $V$, and a vector $\mathbf{n} = (n_1,\ldots,n_m) \in \N^m$, we write
\[
V_{\mathbf{n}} = V_{n_1,\ldots,n_m} := V([n_1] \times \ldots \times [n_m])
\]
A \textbf{transition map} of $V$ is any map induced by a map $f_1 \times \ldots \times f_m$ such that at least one $f_i$ is not invertible.\\

The category of $\FI^m$-modules is abelian, with the usual abelian operations defined pointwise. This allows us to formulate obvious definitions for things such as \textbf{submodules} and \textbf{quotient modules}. We say that an $\FI^m$-module is \textbf{finitely generated} if there is a finite collection of elements
\[
\{v_j\} \subseteq \sqcup_{\mathbf{n}}V_\mathbf{n}
\]
such that no proper submodule of $V$ contains all of the $v_j$.\\
\end{definition}

It is a fact that the category of $\FI^m$-modules over any Noetherian commutative ring $k$ is locally Noetherian. That is, submodules of finitely generated $\FI^m$-modules are once again finitely generated. This was proven for general $\FI^m$-modules by Sam and Snowden in \cite{SS}, as well as by Gadish in \cite{G} when $k$ is a field of characteristic 0. The specific case of $\FI$-modules was proven earlier by Snowden \cite{S}, as well as by Church, Ellenberg, Farb and Nagpal \cite{CEF,CEFN}. We will be using this Noetherian property all throughout the current work.\\

\emph{For the remainder of this paper, $m$ will be reserved to denote the positive integer in $\FI^m$. We also fix a commutative Noetherian ring $k$.}\\

\begin{definition}\label{induced}
Fix an integral vector $\mathbf{r} = (r_1,\ldots,r_m) \in \N^m$, as well as a $k[\Sn_{\mathbf{r}}]$-module, $W$. Then we define the \textbf{induced $\FI^m$-module on $W$} by the assignment
\[
M(W)_{\mathbf{n}} = k[\Hom_{\FI^m}(\mathbf{r},\mathbf{n})] \otimes_{k[\Sn_{\mathbf{r}}]} W
\]
where $k[\Hom_{\FI^m}(\mathbf{r},\mathbf{n})]$ is the free $k$-module with basis indexed by the set $\Hom_{\FI^m}(\mathbf{r},\mathbf{n})$. The $\FI^m$-morphisms act on $M(W)$ by composition on the left factor. In the specific cases wherein $W = k[\Sn_{\mathbf{r}}]$ is the regular representation, we write $M(W) = M(\mathbf{r})$.\\

An $\FI^m$-module is said to be \textbf{semi-induced} if it admits a finite filtration whose every co-factor is isomorphic to an induced module.\\
\end{definition}

Induced and semi-induced modules play a pivotal role in the study of $\FI^m$-modules. For instance, we have the following.\\

\begin{proposition}\label{inducedproperties}
Let $W$ denote a $k[\Sn_{\mathbf{r}}]$-module for some fixed integral vector $\mathbf{r} \in \N^m$, and let $V$ be an $\FI^m$-module. Then:
\begin{enumerate}
\item $\Hom _{\FI^m}(M(W),V) \cong \Hom_{k[\Sn_{\mathbf{r}}]}(W,V_{\mathbf{r}})$;
\item $V$ is finitely generated if and only if there is a surjective map of $\FI^m$-modules
\[
X \rightarrow V \rightarrow 0
\]
where $X$ is some finitely generated semi-induced $\FI^m$-module;
\item if $V$ is finitely generated and projective, then it is semi-induced.
\item if $V$ is finitely generated, then it is acyclic with respect to the \textbf{$\FI^m$ homology functors} if and only if it is semi-induced (see Definition \ref{homdegree}).\\
\end{enumerate}
\end{proposition}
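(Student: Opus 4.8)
The plan is to establish the four parts in the order (1), (2), (4), (3), deducing (3) from (4). For (1), I would exhibit the adjunction bijection directly: a morphism $\varphi\colon M(W)\to V$ is sent to the map $W\to V_{\mathbf{r}}$, $w\mapsto\varphi(\mathrm{id}_{\mathbf{r}}\otimes w)$, which is $k[\Sn_{\mathbf{r}}]$-linear because $\Sn_{\mathbf{r}}$ acts on $M(W)_{\mathbf{r}}$ through the left tensor factor; conversely a $k[\Sn_{\mathbf{r}}]$-map $\psi\colon W\to V_{\mathbf{r}}$ is extended by $f\otimes w\mapsto V(f)(\psi(w))$, which is well defined precisely because $\psi$ is equivariant and is natural in $\mathbf{n}$ by functoriality of $V$; these two assignments are mutually inverse. (Formally this is the composite of the free-module/forgetful adjunction over $k$, the tensor--hom adjunction over $k[\Sn_{\mathbf{r}}]$, and the co-Yoneda lemma for the functor $\mathbf{n}\mapsto k[\Hom_{\FI^m}(\mathbf{r},\mathbf{n})]$.) Taking $W=k[\Sn_{\mathbf{r}}]$ shows that $\Hom_{\FI^m}(M(\mathbf{r}),-)\cong(-)_{\mathbf{r}}$ is exact, hence each $M(\mathbf{r})$, and so each $M(P)$ with $P$ a projective $k[\Sn_{\mathbf{r}}]$-module, is a projective $\FI^m$-module; this observation will be reused below.

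For (2), the implication $(\Leftarrow)$ is immediate, since a finitely generated semi-induced module is finitely generated and quotients of finitely generated modules are finitely generated. For $(\Rightarrow)$, choose generators $v_1,\dots,v_s$ of $V$ with $v_j\in V_{\mathbf{n}_j}$; by (1) they correspond to morphisms $M(\mathbf{n}_j)\to V$, which assemble to a map $\bigoplus_{j}M(\mathbf{n}_j)\to V$ whose image is a submodule containing every $v_j$, hence all of $V$; the source is finitely generated and semi-induced because any finite direct sum of induced modules carries an obvious finite filtration with induced cofactors. For the direction $(\Leftarrow)$ of (4), by d\'evissage along the defining filtration and the long exact sequence of $\FI^m$-homology (see Definition \ref{homdegree}) it suffices to show each induced module $M(W)$ is acyclic: since $\Sn_{\mathbf{r}}$ acts freely by precomposition on $\Hom_{\FI^m}(\mathbf{r},\mathbf{n})$, the functor $M(-)$ is exact, so resolving $W$ by free $k[\Sn_{\mathbf{r}}]$-modules $P_\bullet$ yields a resolution $M(P_\bullet)\to M(W)$ by projective $\FI^m$-modules computing its homology; as $H_0^{\FI^m}(M(P))$ is $P$ concentrated in degree $\mathbf{r}$, the complex $H_0^{\FI^m}(M(P_\bullet))$ is just $P_\bullet$, with homology $W$ in degree $0$ and nothing positive, so $M(W)$ is acyclic.

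The direction $(\Rightarrow)$ of (4) is the substantive one. Given $V$ finitely generated and acyclic, I would use (2) to pick a surjection $\pi\colon X\to V$ with $X$ finitely generated semi-induced, set $K=\ker\pi$ (finitely generated by the Noetherian property), and feed the short exact sequence into the long exact sequence of $\FI^m$-homology: acyclicity of $X$ and of $V$ forces $H_i^{\FI^m}(K)=0$ for $i\ge1$ and $H_0^{\FI^m}(K)\hookrightarrow H_0^{\FI^m}(X)$, so $K$ is again finitely generated and acyclic. One then runs an induction building a semi-induced filtration of $V$ from its top generating degree(s) downward, splitting off at each stage an induced summand; the local input is that for a finitely generated acyclic module the top-degree component of $H_0^{\FI^m}$ is a projective $k[\Sn_{\mathbf{d}}]$-module, which one extracts by comparison with the semi-induced cover $X$ whose evaluations are free over the relevant symmetric group algebras. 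I expect this inductive construction — checking that the induced summand genuinely splits off and that the descent terminates — to be the main obstacle, and the point where the argument must most carefully adapt the known $\FI$-module arguments, the coordinatewise partial order on $\N^m$ replacing the total order on $\N$ and requiring extra care when several maximal generating degrees are incomparable. Finally, (3) follows at once: a finitely generated projective $\FI^m$-module is acyclic for the left derived $\FI^m$-homology functors, since projectives are acyclic for any left derived functor, and hence it is semi-induced by (4).
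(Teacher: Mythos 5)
Your treatments of (1), (2), the deduction of (3) from (4), and the backward direction of (4) are correct and complete in outline: the adjunction argument, the surjection from a finite direct sum of $M(\mathbf{n}_j)$'s, the observation that $M(-)$ is exact because $\Sn_{\mathbf{r}}$ acts freely on $\Hom_{\FI^m}(\mathbf{r},\mathbf{n})$, and the computation $H_i(M(W)) \cong H_i(P_\bullet \text{ in degree } \mathbf{r})$ are all standard and sound. (For comparison, the paper does not prove any of this itself; it cites \cite{LY2} wholesale.)

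The genuine gap is the forward direction of (4), which is the only substantive claim in the proposition, and your proposal does not prove it --- you sketch a strategy and yourself flag its key step as ``the main obstacle.'' Worse, the local input your sketch rests on is false. You claim that for a finitely generated acyclic module the top-degree component of $H_0$ is a projective $k[\Sn_{\mathbf{d}}]$-module: but $V = M(W)$ is induced, hence acyclic, for \emph{any} $k[\Sn_{\mathbf{d}}]$-module $W$, and $H_0(M(W))$ is exactly $W$ concentrated in degree $\mathbf{d}$; over $k = \Zn$, say, take $W$ non-projective and the claim fails. Relatedly, ``splitting off an induced summand'' is not what semi-induced means and is not available: a semi-induced module carries a filtration with induced cofactors, and such extensions need not split, so no direct-sum decomposition should be expected. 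The correct argument (the one in \cite{LY2}, and in the $\FI$ case in the work of Li--Yu and Ramos) inducts on generating degrees and shows that the quotient of $V$ by the submodule generated in the non-maximal degrees is \emph{isomorphic to} the induced module on its top homology --- an isomorphism statement proved using $H_1$-vanishing (and, in the $\FI^m$ setting, the shift functors $\Sigma_j$ and torsion-freeness), not a projectivity or splitting statement --- while the submodule is again finitely generated and acyclic, so the induction can continue. Your kernel computation ($H_i(K)=0$ for $i\geq 1$ and $H_0(K)\hookrightarrow H_0(X)$) is fine but is not by itself enough to launch that induction; as written, the proof of (4)($\Rightarrow$), and hence of (3), is missing.
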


Proofs of the above facts can be found in \cite{LY2}. Each of the parts of Theorem \ref{inducedproperties} illustrate the significance of semi-induced modules when studying right exact functors on the category of finitely generated $\FI^m$-modules. One recent insight, seen in the authors' precurser work \cite{LR} as well as in work of Nagpal \cite{N} and Church, Miller, Nagpal and Reinhold \cite{CMNR}, is that semi-induced modules are also fundamental to the study of left exact functors on the category of finitely generated $\FI^m$-modules. We will later see this when we consider the various local cohomology functors (see Definition \ref{localcohomology}).\\

To conclude this section, we record various notation that will be used throughout the paper.\\

\begin{definition}
For each $1 \leq j \leq m$ we write $e_j$ to denote the usual standard basis vector of $\N^m$. If $\mathbf{r} = (r_1,\ldots,r_m) \in \N^m$, then we use $|\mathbf{r}|$ to denote the sum of its coordinates. Given two vectors $\mathbf{r}, \mathbf{n} \in \N^m$, we say that $\mathbf{r}$ is \textbf{at least one positive step} from $\mathbf{n}$ if $\mathbf{r} \in \cup_j (\mathbf{n} + e_j + \N^m)$.\\
\end{definition}

\subsection{The shift functors and torsion}

In this section we briefly review the shift functors and the various notions of torsion. The shift functors were originally defined by Church, Ellenberg, Farb, and Nagpal for $\FI$-modules in \cite{CEF} and \cite{CEFN}. Their many properties were later discovered and used throughout the literature \cite{C,CE,L,LY,LR,N,R}. The significance of torsion in the study of $\FI$-modules was considered by Sam and Snowden in \cite{SS2}, as well as by the authors in \cite{LR}.\\

In \cite{LY2}, The first author and Yu consider analogs of the shift functors for $\FI^m$-modules. The work in this section traces itself back to that paper.\\

\begin{definition}
Let $1 \leq j \leq m$ be an integer. Then we define the functor $\iota_j:\FI^m \rightarrow \FI^m$ by
\[
\iota_j([\mathbf{n}]) = [n_1] \times \ldots \times [n_{j-1}] \times [n_j + 1] \times [n_{j+1}] \times \ldots \times [n_m]
\]
and for $f_1 \times \ldots \times f_m: [\mathbf{n}] \rightarrow [\mathbf{n}]$,
\[
\iota_j(f_1 \times \ldots \times f_m)(x_1,\ldots,x_m) = \begin{cases} (f_1(x_1),\ldots,f_m(x_m)) &\text{if $x_j \leq n_j$}\\ (f_1(x_1),\ldots,f_{j-1}(x_{j-1}),r_j+1,f_{j+1}(x_{j+1}),\ldots,f_m(x_m)) &\text{otherwise.}\end{cases}
\]
The \textbf{$j$-th shift functor} $\Sigma_j$ is defined on $\FI^m$-modules
\[
\Sigma_j V := V \circ \iota_j
\]
If $\mathbf{a} = (a_1,\ldots,a_m) \in \N^m$, then we write $\Sigma_{\mathbf{a}}$ to denote the composition
\[
\Sigma_{\mathbf{a}}V := (\Sigma_1^{a_1} \circ \ldots \circ \Sigma_m^{a_m})V.
\]
\end{definition}

Perhaps one of the most significant properties of the shift functor is how it interacts with induced and semi-induced modules. The following theorem was first realized by Nagpal in the case of $\FI$-modules \cite{N2}. It was shown for general $\FI^m$-modules by the first author and Yu in \cite{LY2}.\\

\begin{theorem}\label{shiftthm}
Let $W$ denote a $k[\Sn_{\mathbf{r}}]$-module, and let $1 \leq j \leq m$ denote a fixed integer. Then:
\begin{enumerate}
\item $\Sigma_j M(W) \cong M(\Res^{\Sn_{\mathbf{r}}}_{\Sn_{\mathbf{r} - e_j}} W) \oplus M(W)$. In particular, if $V$ is a finitely generated $\FI^m$-module, then the same is true of $\Sigma_j W$.
\item The standard maps $[\mathbf{n}] \rightarrow [\mathbf{n} + e_j]$ induce, for any $\FI^m$-module $V$, a map $V \rightarrow \Sigma_j V$. If $V = M(W)$ is induced, then this map is a split injection.
\item $\HD_0(\Sigma_j V) \supseteq \HD_0(V)$ and $\HD_0(\coker(V \rightarrow \Sigma_j V)) + e_j \supseteq \HD_0(V)$ (see Definition \ref{homdegree})
\item If $V$ is a finitely generated $\FI^m$-module, then there exists a non-negative integer $N(V)$ such that for all integral vectors $\mathbf{a} = (a_1,\ldots,a_m) \in \N^m$ with $a_i \geq N(V)$ for each $i$, $\Sigma_\mathbf{a}V$ is semi-induced.
\item The functor $\Sigma_j$ preserves injective objects. That is, if $I$ is an injective $\FI^m$-module, then the same is true of $\Sigma_jI$.
\end{enumerate}
\end{theorem}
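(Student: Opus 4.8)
The plan is to follow the route pioneered by Nagpal for $\FI$-modules and carried out for $\FI^m$-modules by the first author and Yu, establishing the parts in the listed order since each one is used in the next; the combinatorial core is part (1), parts (2) and (3) are then essentially formal, part (4) is a genuine induction, and part (5) is an adjointness argument. For part (1), I would examine $\Hom_{\FI^m}(\mathbf{r},\mathbf{n}+e_j)$ as a left $\Sn_{\mathbf{r}}$-set carrying the shifted right action of $\FI^m$ on the variable $\mathbf{n}$. A morphism $f_1\times\cdots\times f_m\colon[\mathbf{r}]\to[\mathbf{n}+e_j]$ is sorted by whether the last point $n_j+1$ lies in the image of its $j$-th component $f_j\colon[r_j]\hookrightarrow[n_j+1]$. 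Those that miss $n_j+1$ are exactly the morphisms factoring through the standard inclusion $[\mathbf{n}]\hookrightarrow[\mathbf{n}+e_j]$; since the shifted $\FI^m$-action fixes $n_j+1$ in the target, they form a sub-$(\Sn_{\mathbf{r}},\FI^m)$-set naturally isomorphic to $\Hom_{\FI^m}(\mathbf{r},\mathbf{n})$, and tensoring with $W$ over $k[\Sn_{\mathbf{r}}]$ recovers the $M(W)$-summand. Those that hit $n_j+1$ possess a unique $x\in[r_j]$ with $f_j(x)=n_j+1$; deleting $x$ and $n_j+1$ gives an injection $[r_j]\setminus\{x\}\hookrightarrow[n_j]$, and letting $x$ vary while tracking the resulting $\Sn_{\mathbf{r}}$-action realizes, after tensoring with $W$, the summand $M(\Res^{\Sn_{\mathbf{r}}}_{\Sn_{\mathbf{r}-e_j}}W)$. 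Verifying $\Sn_{\mathbf{r}}$-equivariance and naturality in $\mathbf{n}$ finishes the decomposition. The ``in particular'' clause follows because restriction and $M(-)$ preserve finite generation, and because an arbitrary finitely generated $V$ is a quotient of a finitely generated semi-induced module (Proposition \ref{inducedproperties}(2)) to which one applies the exact functor $\Sigma_j$.

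Part (2) is formal: the standard inclusions $[\mathbf{n}]\hookrightarrow[\mathbf{n}+e_j]$ assemble into a natural transformation $\mathrm{id}_{\FI^m}\Rightarrow\iota_j$, and precomposing with $V$ yields the map $V\to V\circ\iota_j=\Sigma_j V$; when $V=M(W)$, unwinding part (1) identifies this with the inclusion of the $M(W)$-summand, so it is a split injection. For part (3), I would apply $\Sigma_j$ to a minimal semi-induced presentation $M_1\to M_0\to V\to 0$. By part (1) together with a filtration argument, $\Sigma_j$ takes each $M_i$ to a semi-induced module whose generating degrees are those of $M_i$ together with their translates by $-e_j$, and by part (2) the comparison maps $M_i\to\Sigma_j M_i$ are split injections. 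Reading off $H_0^{\FI^m}$ of $\Sigma_j V$ and of $\coker(V\to\Sigma_j V)$ from this presentation then yields both containments (Definition \ref{homdegree}): the original generators persist split-injectively in $\Sigma_j V$, giving $\HD_0(\Sigma_j V)\supseteq\HD_0(V)$, while the new induced summands $M(\Res^{\Sn_{\mathbf{r}}}_{\Sn_{\mathbf{r}-e_j}}W)$ lie one $e_j$-step lower, giving $\HD_0(\coker(V\to\Sigma_j V))+e_j\supseteq\HD_0(V)$. The point requiring attention is that no cancellation occurs among the new generators when one passes to the induced presentation of $\Sigma_j V$; this is exactly what split injectivity in part (2) guarantees.

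Part (4) is the main obstacle, and is the $\FI^m$-analogue of Nagpal's theorem that a sufficiently high shift of a finitely generated $\FI$-module is semi-induced. By Proposition \ref{inducedproperties}(4) it is equivalent to prove that $\Sigma_{\mathbf{a}}V$ is acyclic for the $\FI^m$-homology functors once every coordinate of $\mathbf{a}$ is large. I would induct on an appropriate complexity invariant of $V$ (built from its homological degrees), using the four-term exact sequence $0\to K\to V\to\Sigma_j V\to Q\to 0$ in which $K$ and $Q$ are finitely generated and strictly less complex, so that their high shifts are semi-induced by induction; one then patches these together, using that $K$ is $B$-torsion-like and that extensions of semi-induced modules behave controllably under further shifting. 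Reducing over the $m$ coordinates one at a time — at each stage invoking the one-variable input and checking via parts (1)--(3) that later shifts do not reintroduce homology in the coordinates already cleared — produces the uniform bound $N(V)$, finiteness of which rests on Noetherianity. The delicate part is precisely the patching step together with the compatibility of the iterated shifts.

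Finally, part (5): $\Sigma_j=(\iota_j)^{*}$ is precomposition with the endofunctor $\iota_j$ of $\FI^m$, hence it is exact and admits a left adjoint given by left Kan extension $\mathrm{Lan}_{\iota_j}$. Since a functor with an exact left adjoint preserves injective objects, it suffices to show that $\mathrm{Lan}_{\iota_j}$ is exact; being a colimit it is automatically right exact, and left exactness is read off from the combinatorics of $\FI^m$, the comma categories $\iota_j\downarrow[\mathbf{m}]$ decomposing so that the defining colimit reduces to a direct sum of evaluations, hence is exact. This completes the plan.
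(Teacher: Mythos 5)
Your parts (1)--(3) and (5) are fine in outline, and it is worth noting that the paper itself only proves part (5), citing \cite{LY2} for parts (1)--(4). For (5) your route is essentially the paper's: the paper exhibits an exact left adjoint $\Ind_j$ of $\Sigma_j$, given pointwise by $(\Ind_j V)_{\mathbf{n}}=\Ind_{\Sn_{\mathbf{n}-e_j}}^{\Sn_{\mathbf{n}}}V_{\mathbf{n}-e_j}$ and exact because $k\FI^m(\mathbf{n}-e_j,\mathbf{n})$ is a free right $k[\Sn_{\mathbf{n}-e_j}]$-module; your $\mathrm{Lan}_{\iota_j}$ is the same functor, and the ``direct sum of evaluations'' you invoke is exactly this induced representation --- though you assert rather than verify the finality/decomposition of the comma category, which is the whole content of that step. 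A small correction to (3): the right mechanism is right exactness of $H_0$ and of $V\mapsto\coker(V\to\Sigma_j V)$ applied to a semi-induced cover $F\rightarrow V$ with $\HD_0(F)=\HD_0(V)$, combined with your computation of $\Sigma_j$ and its cokernel on induced modules; ``split injectivity prevents cancellation'' is not what makes the containments true, since $H_0(\Sigma_j V)$ is read off from a cover of $\Sigma_j V$, not from $V$ sitting inside it.

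The genuine gap is part (4), which is the $\FI^m$ analogue of Nagpal's shift theorem and the deepest claim in the statement (the paper outsources it to \cite{LY2}). Your proposal names an induction but does not carry it out: the ``complexity invariant'' is never defined; it is not shown that $K=\ker(V\to\Sigma_j V)$ and $Q=\coker(V\to\Sigma_j V)$ are strictly simpler in a sense that closes the induction (by your own part (3), the generating degree of $Q$ drops only in the $j$-th coordinate, while $K$ is direction-$j$ torsion and admits no generation-degree drop at all --- controlling it requires the bound on torsion degrees, i.e.\ the analogue of Proposition \ref{torsionprop}(2), which you never invoke); and the ``patching'' step is precisely where the theorem lives. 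The four-term sequence does not exhibit $\Sigma_{\mathbf{a}}V$ as an extension of semi-induced modules, so knowing that high shifts of $K$ and $Q$ are semi-induced does not formally yield that a high shift of $V$ is: one must argue, e.g.\ via the homology-acyclicity criterion of Proposition \ref{inducedproperties}(4), that $H_1$ (and higher homology) of the shifted module vanishes, and that vanishing is the assertion being proved. Likewise the coordinate-by-coordinate reduction needs the statement that shifts in one direction preserve the semi-inducedness already achieved in another (this does follow from part (1), but you only flag it). As written, part (4) is a plan with the hard steps explicitly left open, not a proof.
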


\begin{proof}
The only part of this theorem that does not appear in \cite{LY2} is the last part. We will prove the following stronger statement: The functor $\Sigma_j$ admits an exact left adjoint, $\Ind_j$.\\

In the case $m = 1$, the existence of $\Ind_j$ is shown in \cite{LR}. This construction can be applied to the general situation in similar fashion. This functor is exact, as it is given on points by
\[
(\Ind_j V)_{\mathbf{n}} = \Ind_{\Sn_{\mathbf{n}-e_j}}^{\Sn_\mathbf{n}} V_{\mathbf{n}-e_j}
\]
and note that $k\FI^m(\mathbf{n}-e_j, \mathbf{n})$ is a free right $k\Sn_{\mathbf{n}-e_j}$-module.
\end{proof}

The third part of Theorem \ref{shiftthm} is particularly significant. We will later see that it is the key piece needed to prove certain finiteness statements for local cohomology, along with countless other facts. See \cite{LR,LY2,N,N2} for some of the countless applications of this theorem in a variety of contexts.\\

For the purposes of this paper, the relevance of the shift functors is mostly in their connection to torsion.\\

\begin{definition}
Let $V$ be an $\FI^m$-module. An element $v \in V_{\mathbf{n}}$ is said to be \textbf{$B$-torsion} if it is in the kernel of some transition map $V_{\mathbf{n}} \rightarrow V_{\mathbf{r}}$. We say that $V$ is \textbf{$B$-torsion} if all of its elements are $B$-torsion.\\

If $1 \leq j \leq m$, then we say an element $v \in V_{\mathbf{n}}$ is \textbf{$B$-torsion in direction $j$} if it is in the kernel to all transition maps into $V_{\mathbf{n} + e_j}$. Note that this is equivalent to being in the kernel of the map $V \rightarrow \Sigma_j V$ of Theorem \ref{shiftthm}.\\
\end{definition}

\begin{remark}
The notion of $B$-torsion is borrowed from algebraic geometry and the study of Cox rings. In that setting, $B$ is usually used to denote the geometrically irrelevant ideal and $B$-torsion is the basis for local cohomology. The same will be true in this context.\\
\end{remark}

The following collects most of the important facts about $B$-torsion in $\FI^m$-modules. Proofs can be found throughout \cite{LY2}.\\

\begin{proposition}\label{torsionprop}
Let $V$ be an $\FI^m$-module. Then:
\begin{enumerate}
\item there is an exact sequence
\[
0 \rightarrow V_T \rightarrow V \rightarrow V_F \rightarrow 0
\]
where $V_T$ is $B$-torsion, and $V_F$ is $B$-torsion-free;
\item if $V$ is $B$-torsion and finitely generated, then there exists a positive integer, $\td(V)$, such that $V_{\mathbf{r}} = 0$ for all $\mathbf{r} = (r_1,\ldots,r_m)$ with $r_i \geq \td(V)$ for all $i$.;
\item $V$ is $B$-torsion-free if and only if the map $V \rightarrow \Sigma_{(1,1,\ldots,1)}V$ from the second part of Theorem \ref{shiftthm} is injective.\\
\end{enumerate}
\end{proposition}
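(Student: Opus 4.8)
The plan is to prove the three parts of Proposition \ref{torsionprop} essentially independently, building each on the structural facts already recorded for $\FI^m$-modules and their shift functors.

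For part (1), I would define $V_T \subseteq V$ to be the subfunctor whose value at $\mathbf{n}$ is the set of $B$-torsion elements of $V_{\mathbf{n}}$. The first thing to check is that this is in fact a submodule, i.e.\ that it is closed under the transition maps and the $\Sn_{\mathbf{n}}$-actions; this is a short diagram chase using the fact that a composite of transition maps is again a transition map (so if $v$ dies under some map out of $V_{\mathbf{n}}$, then its image under any morphism $V_{\mathbf{n}} \to V_{\mathbf{r}}$ still dies under a suitable further map). Setting $V_F := V/V_T$, exactness of $0 \to V_T \to V \to V_F \to 0$ is automatic; the only remaining point is that $V_F$ is $B$-torsion-free, which follows because any $\bar v \in (V_F)_{\mathbf{n}}$ that dies under a transition map lifts to a $v \in V_{\mathbf{n}}$ whose image under that map already lies in $V_T$, hence is itself killed by a further transition map, forcing $v \in V_{\mathbf{n}}$ to be $B$-torsion and $\bar v = 0$. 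Note there is no finite-generation hypothesis here, so this part is purely formal.

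For part (2), assume $V$ is $B$-torsion and finitely generated, with generators living in degrees bounded above by some $\mathbf{d}$. By Proposition \ref{inducedproperties}(2) and Theorem \ref{shiftthm}(4), after a large enough shift $\Sigma_{\mathbf{a}}V$ becomes semi-induced; but a $B$-torsion module has zero image in its shifts (by the definition of $B$-torsion in each direction together with Proposition \ref{torsionprop}(3), which I would prove first or in parallel), and the only semi-induced module that is also $B$-torsion is $0$ — because an induced module $M(W)$ is $B$-torsion-free (its transition maps are injective, being induced from injections of $\Hom$-sets), and an extension of $B$-torsion-free modules built from a finite filtration with induced cofactors is again $B$-torsion-free. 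Hence $\Sigma_{\mathbf{a}}V = 0$ for $\mathbf{a}$ with all coordinates $\geq N(V)$, and since $(\Sigma_{\mathbf{a}}V)_{\mathbf{r}} = V_{\mathbf{r}+\mathbf{a}}$, this gives $V_{\mathbf{r}} = 0$ whenever every coordinate of $\mathbf{r}$ is at least $\td(V) := N(V)$. (If one prefers a bound directly in terms of the generators one can instead induct on the number of generators and use finiteness of the symmetric-group actions, but routing through the shift theorem is cleanest.)

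For part (3), one direction is immediate: if $V \to \Sigma_{(1,\ldots,1)}V$ is injective then in particular every element survives the standard map into each $V_{\mathbf{n}+e_j}$, so $V$ has no $B$-torsion. Conversely, suppose $v \in V_{\mathbf{n}}$ maps to $0$ in $(\Sigma_{(1,\ldots,1)}V)_{\mathbf{n}} = V_{\mathbf{n}+(1,\ldots,1)}$ under the standard inclusion; then $v$ is killed by a transition map, so $v$ is $B$-torsion, so $v = 0$ by hypothesis. The slightly delicate point — and the step I expect to be the main obstacle — is identifying "being in the kernel of \emph{some} transition map" with "being in the kernel of the \emph{specific} standard map into $\Sigma_{(1,\ldots,1)}V$": one must argue that if $v$ is killed by any morphism $f_1\times\cdots\times f_m$ in which some $f_i$ is non-invertible, then it is already killed by the iterated standard one-point inclusions, which uses the factorization of an arbitrary $\FI^m$-morphism through standard inclusions followed by an automorphism together with the equivariance of the transition maps. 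This is exactly the content of the parenthetical remark in the definition ("Note that this is equivalent to being in the kernel of the map $V \to \Sigma_j V$"), and once it is in hand the rest of parts (2) and (3) go through as above. All of these facts are proved in \cite{LY2}; we only sketch the arguments here.
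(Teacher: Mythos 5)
Since the paper gives no argument for Proposition \ref{torsionprop} (it simply points to \cite{LY2}), your sketch is being judged on its own terms. Parts (1) and (3) are essentially the standard arguments and are fine, \emph{provided} you actually prove the lemma you keep invoking implicitly: if $v \in V_{\mathbf{n}}$ is annihilated by one transition map $V_{\mathbf{n}} \to V_{\mathbf{t}}$, then it is annihilated by \emph{every} transition map into $V_{\mathbf{N}}$ for every $\mathbf{N}$ dominating $\mathbf{t}$ coordinatewise (complete the two injections to a common target $[\mathbf{N}]$ and use that $\Sn_{\mathbf{N}}$ acts transitively on $\Hom_{\FI^m}([\mathbf{n}],[\mathbf{N}])$, plus equivariance). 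This lemma, not ``composites of transition maps are transition maps,'' is what makes $V_T$ closed under addition and under the $\FI^m$-action in part (1), and it is exactly the ``delicate point'' you defer at the end of part (3); it should be stated and proved once, up front.

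Part (2) as written has a genuine gap. The assertion that ``a $B$-torsion module has zero image in its shifts'' is false: already for $m=1$, the torsion module with $V_n = k$ for $n \leq 5$ and $0$ otherwise (identity transition maps in that range) maps nontrivially to $\Sigma_1 V$. What your argument actually needs is that $\Sigma_{\mathbf{a}}V$ is itself $B$-torsion; combined with your (correct) observation that induced modules, and hence semi-induced modules, are $B$-torsion-free, this forces $\Sigma_{\mathbf{a}}V = 0$. That bridging fact follows from the lemma above but is never established in your sketch. There is also a circularity concern: deducing the elementary boundedness statement (2) from Theorem \ref{shiftthm}(4) is logically backwards, since the proof of that theorem in \cite{LY2} (as in Nagpal's argument for $\FI$) itself uses the boundedness of finitely generated torsion modules; your route is only legitimate if Theorem \ref{shiftthm}(4) is treated as a black box. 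The argument you relegate to a parenthesis --- each generator $v_i$ dies under one map into some $[\mathbf{t}_i]$, hence by the lemma under all transition maps into degrees dominating $\mathbf{t}_i$, and $V_{\mathbf{r}}$ is spanned by images of the generators, so $V_{\mathbf{r}} = 0$ once every coordinate of $\mathbf{r}$ is at least $\max_{i,j}(\mathbf{t}_i)_j$ --- is the correct, self-contained proof and should be the main line rather than the alternative.
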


\subsection{$\FI^m$-module Homology}

In this section we recall the definition of $\FI^m$-module homology. The homology functors were introduced by Church, Ellenberg, Farb, and Nagpal in the case of $\FI$-modules \cite{CEFN}. Following this, they were deeply explored by a large collection of authors \cite{C,CE,GL,LR,LY,R}. As with previous sections, the study of $\FI^m$-module homology originates with the work of the first author and Yu \cite{LY2}.\\

\begin{definition}\label{homdegree}
Let $V$ be an $\FI^m$-module. The \textbf{0-th homology of $V$} is the $\FI^m$-module defined on points by
\[
H_0(V)_{\mathbf{n}} = V_{\mathbf{n}}/ V_{< \mathbf{n}}
\]
where $V_{< \mathbf{n}}$ is the submodule of $V_{\mathbf{n}}$ generated by elements which are in the image of some transition map. The functor $V \mapsto H_0(V)$ is right exact, and we denote its derived functors by $H_i(\bullet)$. These functors are the \textbf{$\FI^m$ homology functors}.\\

Given a finitely generated $\FI^m$-module $V$, and an integer $i \geq 0$, we define the \textbf{$i$-th homological degree} of $V$ to be the set
\[
\HD_i(V) := \{\mathbf{r} \in \Zn^m \mid H_i(V)_{\mathbf{n}} = 0 \text{ whenever } \mathbf{n} \in \cup_j (\mathbf{r} + e_j + \N^m).\}
\]
\end{definition}

\begin{remark}
For minor technical reasons (see the Remark \ref{technicalissue}), we have to consider $\HD_i(V)$ as being a subset of $\Zn^m$ instead of $\N^m$. Note that for this to make sense we simply assert that $V_{\mathbf{n}} = 0$ whenever $\mathbf{n} \in \Zn^m - \N^m$.\\
\end{remark}

Before we continue, we hope to develop some intuition for $\FI^m$ homology which is rooted in more traditional algebra. Let $k\FI^m$ denote the ring whose additive group is given by
\[
k\FI^m = \bigoplus_{\mathbf{n},\mathbf{r}} k[\Hom_{\FI^m}(\mathbf{n},\mathbf{r})]
\]
where $k[\Hom_{\FI^m}(\mathbf{n},\mathbf{r})]$ is the free $k$-module with basis indexed by the set $\Hom_{\FI^m}(\mathbf{n},\mathbf{r})$. Multiplication in this ring is defined on basis vectors by
\[
(f_1 \times \ldots \times f_m) \cdot (g_1 \times \ldots \times g_m) = \begin{cases} f_1 \circ g_1 \times \ldots \times f_m \circ g_m &\text{if this makes sense}\\ 0 &\text{otherwise.}\end{cases}
\]
Then an $\FI^m$-module may be viewed as a left $k\FI^m$-module $V$ which admits a direct sum decomposition
\[
V = \bigoplus_{\mathbf{n}} id_\mathbf{n} \cdot V
\]
where $id_{\mathbf{n}}$ is the identity function on $[\mathbf{n}]$. Then there is an ideal in $k\FI^m$, denoted by $\mi$, which is spanned by basis vectors which are not bijections. In this language, the homology functors are precisely given by
\[
H_i(V) = \Tor_i(k\FI^m/\mi, V)
\]
Thinking of $k\FI^m$ as being analogous to a multi-graded polynomial ring, and $\mi$ as being the unique maximal homogeneous ideal, the homology can be thought of as capturing the generating degrees of syzygies of $V$. In particular, we have the following.\\

\begin{proposition}
Let $V$ be a finitely generated $\FI^m$-module. Then there exists an exact sequence
\[
\cdots \rightarrow F^{(1)} \rightarrow F^{(0)} \rightarrow V \rightarrow 0
\]
such that:
\begin{enumerate}
\item $F^{(i)}$ is semi-induced for each $i \geq 0$;
\item $HD_0(F^{(0)}) = HD_0(V)$;
\item $HD_i(V) \supseteq HD_0(F^{(i)})$ for $i \geq 1$.\\
\end{enumerate}
\end{proposition}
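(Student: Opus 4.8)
The plan is to build the resolution one step at a time using Proposition \ref{inducedproperties}(2), and to control homological degrees using the long exact sequence in $\FI^m$-homology. First I would invoke Proposition \ref{inducedproperties}(2) to obtain a surjection $F^{(0)} \to V \to 0$ with $F^{(0)}$ a finitely generated semi-induced module. For (2), I want $\HD_0(F^{(0)}) = \HD_0(V)$, so rather than using an arbitrary surjection I would take the ``minimal'' one: pick a finite generating set of $V$ whose degrees witness $\HD_0(V)$, i.e. one generator in each degree $\mathbf{r}$ lying just outside the region forced by $\HD_0(V)$, and let $F^{(0)}$ be the corresponding direct sum of modules $M(\mathbf{r})$ (or more precisely $M(W_{\mathbf{r}})$ for the appropriate $\Sn_{\mathbf{r}}$-module $W_{\mathbf{r}} = H_0(V)_{\mathbf{r}}$). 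Since $H_0$ is right exact and $H_0(M(W))$ is concentrated in the generating degree, this gives $\HD_0(F^{(0)}) = \HD_0(V)$; the surjectivity of $F^{(0)} \to V$ follows because the chosen elements generate $V$ by definition of $\HD_0$. Let $K^{(0)} = \ker(F^{(0)} \to V)$, which is finitely generated by the Noetherian property.

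Next I would iterate: having constructed $F^{(0)}, \ldots, F^{(i-1)}$ and the $i$-th syzygy $K^{(i-1)}$ (a finitely generated $\FI^m$-module), apply the same construction to $K^{(i-1)}$ to get a surjection $F^{(i)} \to K^{(i-1)} \to 0$ with $F^{(i)}$ finitely generated semi-induced and $\HD_0(F^{(i)}) = \HD_0(K^{(i-1)})$, then set $K^{(i)} = \ker(F^{(i)} \to K^{(i-1)})$. Splicing these short exact sequences produces the desired complex $\cdots \to F^{(1)} \to F^{(0)} \to V \to 0$, which is exact by construction, and (1) holds automatically. It remains to verify (3): $\HD_i(V) \supseteq \HD_0(F^{(i)})$ for $i \geq 1$. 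For this I would use the long exact sequence in $\FI^m$-homology applied to $0 \to K^{(0)} \to F^{(0)} \to V \to 0$. Since $F^{(0)}$ is semi-induced, it is acyclic for the homology functors by Proposition \ref{inducedproperties}(4), so $H_i(V) \cong H_{i-1}(K^{(0)})$ for $i \geq 2$ and there is a surjection $H_0(K^{(0)}) \twoheadrightarrow H_1(V)$ (the tail $H_1(F^{(0)}) = 0 \to H_1(V) \to H_0(K^{(0)}) \to H_0(F^{(0)})$). Iterating the dimension-shift, $H_i(V) \cong H_{i-j}(K^{(j-1)})$ for $i > j$, and in particular $H_i(V)$ is a subquotient of $H_0(K^{(i-1)})$; more precisely $H_i(V) \hookrightarrow H_0(K^{(i-1)})$ after the shifts, or at worst is a quotient of it. Since $\HD_0(F^{(i)}) = \HD_0(K^{(i-1)})$ and vanishing of $H_0(K^{(i-1)})$ in a region forces vanishing of any subquotient in that region, we conclude $\HD_i(V) \supseteq \HD_0(K^{(i-1)}) = \HD_0(F^{(i)})$.

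The main obstacle I anticipate is the bookkeeping around part (2), namely arranging the first (and each subsequent) semi-induced cover so that its $0$-th homological degree exactly matches that of the module being covered, rather than merely containing it. This requires knowing that a finitely generated $\FI^m$-module admits a generating set realizing $\HD_0$ — essentially that $H_0(V)$ itself, viewed suitably, has a semi-induced cover concentrated in the right degrees — which should follow from the structure of $H_0(V)$ as a finitely generated $\FI^m$-module supported on the ``generating degrees,'' but needs a clean statement. A secondary subtlety is the precise homological algebra in (3): tracking whether the relevant comparison maps $H_i(V) \to H_0(K^{(i-1)})$ are injections or surjections after the dimension shifts. Either way the conclusion $\HD_i(V) \supseteq \HD_0(K^{(i-1)})$ holds, since $\HD$ is determined by a vanishing region and passes to subquotients, so this subtlety does not affect the result — but it should be stated carefully to avoid a gap.
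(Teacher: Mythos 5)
The paper states this proposition without proof, so there is no argument of the authors to compare against; your plan is the natural one (a minimal semi-induced cover at each stage, plus dimension shifting via homology-acyclicity of semi-induced modules, Proposition \ref{inducedproperties}(4)) and it is essentially correct. Two points should be tightened. First, your preferred choice $W_{\mathbf{r}} = H_0(V)_{\mathbf{r}}$ presupposes a $k[\Sn_{\mathbf{r}}]$-linear lift $H_0(V)_{\mathbf{r}} \to V_{\mathbf{r}}$, which need not exist over a general Noetherian $k$. Instead, note that $H_0(V)$ is finitely generated with vanishing transition maps, hence supported in finitely many degrees; for each such degree $\mathbf{r}$ choose finitely many elements of $V_{\mathbf{r}}$ whose images generate $H_0(V)_{\mathbf{r}}$ as a $k[\Sn_{\mathbf{r}}]$-module, and let $F^{(0)}$ be the corresponding finite direct sum of modules $M(\mathbf{r})$. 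Then $H_0(F^{(0)})$ is supported exactly on the support of $H_0(V)$, so $\HD_0(F^{(0)}) = \HD_0(V)$, and surjectivity of $F^{(0)} \to V$ follows from the Nakayama-type fact you flagged: if $Q$ is an $\FI^m$-module with $H_0(Q) = 0$ then $Q = 0$, by induction over the well-founded poset $\N^m$ (a minimal degree $\mathbf{n}$ with $Q_{\mathbf{n}} \neq 0$ would satisfy $Q_{\mathbf{n}} = Q_{<\mathbf{n}} = 0$). This is the ``clean statement'' you asked for, and it also handles every later stage of the iteration, with $K^{(i)}$ finitely generated by local Noetherianity. Second, the long exact sequence gives an injection $H_1(V) \hookrightarrow H_0(K^{(0)})$ (because $H_1(F^{(0)}) = 0$), not a surjection $H_0(K^{(0)}) \twoheadrightarrow H_1(V)$ as written; as you yourself observe, this does not affect the conclusion, since after the dimension shifts $H_i(V) \cong H_1(K^{(i-2)})$ embeds in $H_0(K^{(i-1)})$, and vanishing of $H_0(K^{(i-1)})$ one positive step beyond $\mathbf{r}$ then forces the same vanishing for $H_i(V)$, giving $\HD_i(V) \supseteq \HD_0(K^{(i-1)}) = \HD_0(F^{(i)})$.
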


One of the main goals of this paper is to determine some kind of connection between the homology of an $\FI$-module and its $B$-torsion. For $\FI$-modules, the relationship between homology and $B$-torsion has been made totally precise, and we will visit this connection later when we define local cohomology. For general $\FI^m$-modules the connection becomes a bit more tenuous. To get an idea for why this is, recall from above that homology is defined to be $H_i(V) = \Tor_i(k\FI^m/\mi,V)$. On the other hand, $B$-torsion elements in this language are elements $v \in V$ for which $B^N v = 0$ for all $N \gg 0$, where $B$ is the ideal of $k\FI^m$ generated by maps $f_1 \times \ldots \times f_m:[\mathbf{n}] \rightarrow [\mathbf{r}]$ for which $r_i > n_i$ for each $i$. This is similar to the setting of modules over multi-graded polynomial rings, wherein one must distinguish the ideal generated by the variables of the polynomial ring from the ideal which is geometrically irrelevant. In the case $m = 1$, $\mi$ and $B$ are the same ideal, and the relationship between homology and torsion becomes much more accessible.\\

For now, we state the following theorem, which shows that every finitely generated $\FI^m$-module can be approximated by $B$-torsion modules and homology acyclic modules. This theorem was famously proven by Nagpal \cite{N2} for $\FI$-modules, and was more recently generalized to $\FI^m$-modules by the first author and Yu in \cite{LY2}.\\

\begin{theorem}\label{cechcomplex}
Let $V$ be a finitely generated $\FI^m$-module. There there exists a bounded complex of finitely generated $\FI^m$-modules
\[
\filcom V: 0 \rightarrow V \rightarrow F^{(0)} \rightarrow \ldots \rightarrow F^{(l)} \rightarrow 0
\]
such that:
\begin{enumerate}
\item $H^i(\filcom V)$ is $B$-torsion for all $i$;
\item $F^{(i)}$ is semi-induced for all $i$.\\
\end{enumerate}
\end{theorem}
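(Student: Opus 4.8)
The plan is to build the complex $\filcom V$ by iterating the shift-and-cokernel construction, using the fact (Theorem \ref{shiftthm}(4)) that a sufficiently large shift of any finitely generated $\FI^m$-module is semi-induced. First I would reduce to a cleaner situation: by Proposition \ref{torsionprop}(1) there is a short exact sequence $0 \to V_T \to V \to V_F \to 0$ with $V_T$ being $B$-torsion and $V_F$ being $B$-torsion-free. Since $V_T$ contributes only $B$-torsion (it can be absorbed into $H^0(\filcom V)$), and since a complex for $V_F$ together with the map $V \to V_F$ yields one for $V$ with $B$-torsion homology at the $V$-spot, it suffices to construct $\filcom V$ when $V$ itself is $B$-torsion-free.

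Assuming $V$ is $B$-torsion-free, I would set $\mathbf{a} = (N, N, \ldots, N)$ where $N = N(V)$ is large enough that $\Sigma_{\mathbf{a}} V$ is semi-induced (Theorem \ref{shiftthm}(4)), and take $F^{(0)} = \Sigma_{\mathbf{a}} V$. The natural map $V \to \Sigma_{\mathbf{a}} V$ (iterating the maps of Theorem \ref{shiftthm}(2)) is injective precisely because $V$ is $B$-torsion-free — this is where Proposition \ref{torsionprop}(3) enters, after checking that a map $V \to \Sigma_{\mathbf{a}} V$ with all $a_i \geq 1$ factors through, or is comparable to, the map $V \to \Sigma_{(1,\ldots,1)} V$. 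Its cokernel $W := \coker(V \to \Sigma_{\mathbf{a}} V)$ is again finitely generated by Theorem \ref{shiftthm}(1) and the Noetherian property, so one can repeat: choose a large shift of $W$ that is semi-induced, etc. This produces a (not obviously terminating) resolution
\[
0 \to V \to F^{(0)} \to F^{(1)} \to \cdots
\]
with each $F^{(i)}$ semi-induced and each cokernel $W^{(i)}$ finitely generated; by construction the complex is exact except possibly for $B$-torsion contributions coming from the reduction step, so condition (1) will hold at every spot.

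The main obstacle is \textbf{boundedness} — showing the process terminates, i.e. that after finitely many steps the cokernel $W^{(l)}$ is itself $B$-torsion (and can be truncated away, with $F^{(l)}$ chosen to surject onto $W^{(l-1)}$ with $B$-torsion kernel). The natural invariant to induct on is a measure of the ``defect from being semi-induced,'' controlled by the homological degrees $\HD_i$. The key input is Theorem \ref{shiftthm}(3): passing to $\coker(V \to \Sigma_j V)$ strictly decreases (in direction $j$) the relevant homological-degree data, since $\HD_0(\coker(V \to \Sigma_j V)) + e_j \supseteq \HD_0(V)$ and iterating in all directions forces a strict drop in a well-founded quantity such as $\max_i |\HD_i|$ or the top nonvanishing homological degree. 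I would make this precise by tracking, for the $B$-torsion-free part, the largest $i$ with $H_i \neq 0$ together with the location of the generators of $H_i$; each application of the large shift $\Sigma_{\mathbf{a}}$ with $\mathbf{a}$ componentwise at least as large as these generating degrees kills the top homology, lowering the homological dimension by one, and after at most $\gd(V)$-many rounds the cokernel has no higher homology, hence by Proposition \ref{inducedproperties}(4) is semi-induced, at which point we stop. Bookkeeping that each truncation only introduces $B$-torsion into the homology of $\filcom V$ — using Proposition \ref{torsionprop}(2) to see that the final $B$-torsion cokernel vanishes in high degree — completes the argument. The rest is routine diagram-chasing to assemble the pieces into a single bounded complex.
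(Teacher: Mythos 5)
First, a framing note: the paper does not actually prove Theorem \ref{cechcomplex} --- it quotes it from Nagpal \cite{N2} (for $\FI$) and Li--Yu \cite{LY2} (for $\FI^m$). Your shift-and-cokernel construction is indeed the standard one (and is the shape implicitly used again in the paper's proof of Theorem \ref{cechcomplexlc}): the reduction to the $B$-torsion-free case, the injectivity of $V_F \rightarrow \Sigma_{\mathbf{a}}V_F$ via Proposition \ref{torsionprop}(3), the finite generation of the cokernels via Theorem \ref{shiftthm}(1) and Noetherianity, and the observation that the homology produced at each spot is exactly the ($B$-torsion) kernel of $Q^{(i)} \rightarrow \Sigma_{\mathbf{a}}Q^{(i)}$ are all correct.

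The genuine gap is in the boundedness argument, which you rightly identify as the main obstacle but then resolve with an unsupported mechanism. Theorem \ref{shiftthm}(3) controls only $\HD_0$; nothing you cite (and nothing in the paper) shows that replacing $Q^{(i)}$ by $\coker\bigl(Q^{(i)} \rightarrow \Sigma_{\mathbf{a}}Q^{(i)}\bigr)$ ``kills the top homology'' or lowers a homological dimension by one --- the shift $\Sigma_{\mathbf{a}}Q^{(i)}$ is homology-acyclic, but the cokernel is a different module whose higher homology is not controlled by anything quoted. Worse, the proposed invariant need not even be finite: Proposition \ref{inducedproperties}(4) says only that vanishing of \emph{all} higher homology is equivalent to being semi-induced, and a finitely generated module that is not semi-induced can have $H_i \neq 0$ for infinitely many $i$, so ``the largest $i$ with $H_i \neq 0$'' is not a well-founded induction parameter (also, $\max_i |\HD_i|$ is not meaningful, since each $\HD_i$ is a set of vectors). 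The invariant that actually works is the one Theorem \ref{shiftthm}(3) governs, namely the generating degree itself: if the generators of $Q$ have total degree at most $D$, then filtering $\coker\bigl(Q \rightarrow \Sigma_{(1,\ldots,1)}Q\bigr)$ by the one-direction cokernels and applying Theorem \ref{shiftthm}(3) in each direction shows its generators have total degree at most $D-1$. Hence after at most $D+1$ rounds the cokernel has $H_0 = 0$ and is therefore zero (it is finitely generated), so the complex terminates with exact, or at worst $B$-torsion, homology at the last spot. With that substitution for your termination step, the rest of your outline goes through and reproduces the cited proof.
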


\begin{remark}
Note that the assignment $V \mapsto \filcom V$ is \emph{not} functorial.\\
\end{remark}

Just as was the case for $\FI$-modules \cite{LR}, we will find that the complex $\filcom V$ is actually computing the local cohomology modules of $V$ (Theorem \ref{cechcomplexlc}).\\

\section{Local cohomology}

\subsection{Definition and first properties}

\begin{definition}\label{localcohomology}
Let $V$ be an $\FI^m$-module. We define the \textbf{$B$-torsion functor} by the assignment
\[
H^0_B(V) := V_T
\]
where $V_T$ is the $B$-torsion module guaranteed by the first part of Proposition \ref{torsionprop}. The $B$-torsion functor is left exact, and therefore admits right derived functors, which we denote $H^i_B(V)$. The collection of functors $H^i_B(V)$ are known as the \textbf{local cohomology functors}.\\
\end{definition}

\begin{remark}
The category of $\FI^m$-modules is Grothendieck by virtue of the fact that it is a functor category from a small category into a module category. From this it follows that it has sufficiently many injective modules, and one may make sense of the local cohomology functors. However, it is rarely the case that these injective modules are also finitely generated. As a consequence, it is a priori unclear whether the modules $H^i_B(V)$ are finitely generated, even assuming that $V$ is. We will later prove that it is actually always the case that $H^i_B(V)$ is finitely generated assuming that $V$ is (see Corollary \ref{finite}).\\
\end{remark}

It will be useful to us to have an alternative definition of local cohomology in terms of a directed limit of $\Ext$ functors.\\

\begin{definition}
Let $\mathbf{a} = (a_1,\ldots,a_m) \in \N^m$, and let $N > 0$ be a fixed positive integer. We define $B^N\cdot M(\mathbf{a})$ to be the submodule of $M(\mathbf{a})$ generated by $M(\mathbf{a})_{\mathbf{a} + N\cdot(1,\ldots,1)}$.\\
\end{definition}

\begin{lemma}
Let $V$ be an $\FI^m$ module. Then for each $\mathbf{a} = (a_1,\ldots,a_m) \in \N^m$, and $N > 0$ the $k[\Sn_\mathbf{a}]$-module $\Hom(M(\mathbf{a})/B^N \cdot M(\mathbf{a}), V)$ is isomorphic to the submodule of $V_{\mathbf{a}}$ of elements which are in the kernel of some transition map $f_1 \times \ldots \times f_m: [\mathbf{a}] \rightarrow [\mathbf{n}]$ with $\max_i{n_i - a_i} \leq N$.\\
\end{lemma}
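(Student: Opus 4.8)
The plan is to compute the Hom-group directly from the universal property of induced modules. By Proposition \ref{inducedproperties}(1), a homomorphism $M(\mathbf{a}) \to V$ is the same data as an element $v \in V_{\mathbf{a}}$ (taking $W = k[\Sn_{\mathbf{a}}]$, so that $\Hom_{k[\Sn_{\mathbf{a}}]}(k[\Sn_{\mathbf{a}}], V_{\mathbf{a}}) \cong V_{\mathbf{a}}$), and this identification is $\Sn_{\mathbf{a}}$-equivariant. Under this dictionary, the image of the generator $\mathrm{id}_{\mathbf{a}} \in M(\mathbf{a})_{\mathbf{a}}$ is $v$, and the image of a basis element $f_1 \times \cdots \times f_m \in M(\mathbf{a})_{\mathbf{n}}$ is the corresponding transition map applied to $v$. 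So the first step is to apply $\Hom(-, V)$ to the short exact sequence
\[
0 \to B^N \cdot M(\mathbf{a}) \to M(\mathbf{a}) \to M(\mathbf{a})/B^N\cdot M(\mathbf{a}) \to 0
\]
to get that $\Hom(M(\mathbf{a})/B^N \cdot M(\mathbf{a}), V)$ is the subgroup of $V_{\mathbf{a}} \cong \Hom(M(\mathbf{a}),V)$ consisting of those $v$ whose associated homomorphism kills $B^N \cdot M(\mathbf{a})$.

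The second step is to unwind what "kills $B^N \cdot M(\mathbf{a})$" means. By definition $B^N \cdot M(\mathbf{a})$ is generated by $M(\mathbf{a})_{\mathbf{a} + N(1,\ldots,1)}$, so a homomorphism $\varphi \colon M(\mathbf{a}) \to V$ vanishes on $B^N \cdot M(\mathbf{a})$ if and only if it vanishes on all of $M(\mathbf{a})_{\mathbf{a}+N(1,\ldots,1)}$, i.e. if and only if $\varphi$ kills every basis element $g_1 \times \cdots \times g_m \colon [\mathbf{a}] \to [\mathbf{a}+N(1,\ldots,1)]$. Since $M(\mathbf{a})_{\mathbf{a}+N(1,\ldots,1)}$ is spanned by such $g$ and each $g$ acts on the generator, $\varphi(g) = g_\ast(v)$ where $g_\ast \colon V_{\mathbf{a}} \to V_{\mathbf{a}+N(1,\ldots,1)}$ is the transition map. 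Hence $v$ lies in $\Hom(M(\mathbf{a})/B^N\cdot M(\mathbf{a}), V)$ precisely when $v$ is killed by every transition map of the special shape $[\mathbf{a}] \to [\mathbf{a}+N(1,\ldots,1)]$.

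The third step reconciles this with the stated description in terms of transition maps $f_1 \times \cdots \times f_m \colon [\mathbf{a}] \to [\mathbf{n}]$ with $\max_i (n_i - a_i) \le N$. This is a factorization argument: on one hand, any such $f = f_1 \times \cdots \times f_m$ with $n_i - a_i \le N$ for all $i$ factors through some $g \colon [\mathbf{a}] \to [\mathbf{a}+N(1,\ldots,1)]$ — simply extend each injection $f_i \colon [a_i] \to [n_i]$ to an injection $[a_i] \to [a_i + N]$ (possible since $a_i + N \ge n_i$ and $f_i$ is injective), then post-compose with an injection $[a_i+N] \to $ ... wait, rather: choose $g_i \colon [a_i] \to [a_i+N]$ injective and $h_i \colon [a_i+N] \to [n_i]$ so that $f_i = h_i \circ g_i$; this is possible because $a_i + N \ge n_i \ge a_i$ and any injection from $[a_i]$ to $[n_i]$ extends and factors through a set of intermediate size $a_i+N$. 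Then $f_\ast(v) = h_\ast(g_\ast(v)) = h_\ast(0) = 0$. Conversely the maps $[\mathbf{a}] \to [\mathbf{a}+N(1,\ldots,1)]$ are themselves of the form required (with $n_i - a_i = N$), so killing all transition maps with $\max_i(n_i - a_i) \le N$ certainly kills all of $B^N\cdot M(\mathbf{a})$. This proves the two subgroups of $V_{\mathbf{a}}$ coincide, and the $k[\Sn_{\mathbf{a}}]$-module structure matches on both sides since all identifications are $\Sn_{\mathbf{a}}$-equivariant.

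The main obstacle I anticipate is the factorization step: one needs to be careful that every component injection $f_i \colon [a_i] \to [n_i]$ with $n_i \le a_i + N$ genuinely factors through the intermediate object $[a_i + N]$, and to handle the degenerate cases $n_i < a_i + N$ or $n_i = a_i$ uniformly (when $n_i < a_i + N$ one pads with the "unused" points, and when some $n_i = a_i$ the corresponding $f_i$ may be a bijection, which is fine). Everything else is a formal consequence of the adjunction in Proposition \ref{inducedproperties}(1) and the definition of $B^N \cdot M(\mathbf{a})$.
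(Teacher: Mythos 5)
Your Steps 1 and 2 are correct and agree with the paper's reduction: by the adjunction for induced modules, $\Hom(M(\mathbf{a})/B^N\cdot M(\mathbf{a}),V)$ is identified with the set of $v\in V_{\mathbf{a}}$ killed by \emph{every} transition map $[\mathbf{a}]\to[\mathbf{a}+N\cdot(1,\ldots,1)]$. The gap is in Step 3, and it is twofold. First, the factorization you invoke does not exist: you want $f_i=h_i\circ g_i$ with $g_i\colon[a_i]\to[a_i+N]$ and $h_i\colon[a_i+N]\to[n_i]$, but $h_i$ would be an injection from a set of size $a_i+N$ into a set of size $n_i\le a_i+N$, which forces $n_i=a_i+N$; when $n_i<a_i+N$ no such $h_i$ exists (an injection $[a_i]\to[n_i]$ factors as $[a_i]\to[n_i]\hookrightarrow[a_i+N]$, i.e.\ through $[a_i+N]$ on the \emph{other} side). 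Second, and more importantly, the statement you end up proving is not the lemma: you identify the Hom group with the elements killed by \emph{all} transition maps with $\max_i(n_i-a_i)\le N$, whereas the lemma asserts it equals the elements killed by \emph{some} such map. These sets differ in general: for $m=1$, $N=2$, and a module with $V_{a+1}\neq 0$ but $V_{a+2}=0$, every $v\in V_a$ lies in the lemma's set (it is killed by the map into degree $a+2$) yet need not be killed by the map into degree $a+1$. Indeed the equality attempted in the first half of your Step 3 is false, which is why the factorization cannot be made to work.

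Consequently the direction that actually requires an argument is missing: if $v$ is killed by a \emph{single} transition map $f\colon[\mathbf{a}]\to[\mathbf{n}]$ with $\max_i(n_i-a_i)\le N$, why does the corresponding morphism kill all of $B^N\cdot M(\mathbf{a})$? Here the composition goes the admissible way: since $n_i\le a_i+N$ for all $i$, there is an injection $[\mathbf{n}]\to[\mathbf{a}+N\cdot(1,\ldots,1)]$, so $v$ is killed by \emph{some} transition map into degree $\mathbf{a}+N\cdot(1,\ldots,1)$; then, because any two injections $[\mathbf{a}]\to[\mathbf{a}+N\cdot(1,\ldots,1)]$ differ by post-composition with an element of $\Sn_{\mathbf{a}+N\cdot(1,\ldots,1)}$, which acts invertibly on $V_{\mathbf{a}+N\cdot(1,\ldots,1)}$, it follows that $v$ is killed by \emph{every} such injection, as required. (The reverse containment is the easy one: a map into degree $\mathbf{a}+N\cdot(1,\ldots,1)$ itself satisfies the bound, so an element killed by all of them is killed by some map of the stated form.) With this replacement for Step 3, your Steps 1 and 2 carry the proof, and the argument then coincides with the paper's.
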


\begin{proof}
By the first part of Theorem \ref{inducedproperties}, a map $M(\mathbf{a})/B^N \cdot M(\mathbf{a}) \rightarrow V$ is uniquely determined by a choice of element of $V_{\mathbf{a}}$ which is in the kernel of every transition map into $V_{\mathbf{a}+N \cdot (1,\ldots,1)}$. It remains to show that this condition is equivalent to being in the kernel of at least one transition map with the stated condition on its image.\\

One direction is clear. Conversely, assume that $v \in V_{\mathbf{a}}$ is in the kernel of some transition map $f_1 \times \ldots \times f_m: [\mathbf{a}] \rightarrow [\mathbf{n}]$ with $\max_i{n_i - a_i} \leq N$. Then we may compose this function with a map $[\mathbf{n}] \rightarrow [\mathbf{a}+N \cdot (1,\ldots,1)]$ and find that $v$ is in the kernel of some transition map into $V_{\mathbf{a} + N \cdot (1,\ldots,1)}$. The action of the symmetric group allows us to now conclude $v$ is in the kernel of every transition map of the required form.\\
\end{proof}

\begin{lemma}
Let $V$ be an $\FI^m$-module, and fix $N > 0$. Then there exists an $\FI^m$-module $\fiHom(k\FI^m/B^N,V)$ for which
\[
\fiHom(k\FI^m/B^N,V)_{\mathbf{a}} = \Hom(M(\mathbf{a})/B^N \cdot M(\mathbf{a}), V),
\]
such that the inclusions $\Hom(M(\mathbf{a})/B^N \cdot M(\mathbf{a}), V) \hookrightarrow V_{\mathbf{a}}$ induce an inclusion $\fiHom(k\FI^m/B^N,V) \hookrightarrow V$.\\
\end{lemma}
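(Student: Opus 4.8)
The plan is to construct the $\FI^m$-module $\fiHom(k\FI^m/B^N, V)$ levelwise and then exhibit the transition maps, after which the claimed inclusion into $V$ is almost immediate from the preceding lemma. First I would \emph{define} the functor on objects by
\[
\fiHom(k\FI^m/B^N,V)_{\mathbf{a}} := \Hom_{\FI^m}\!\bigl(M(\mathbf{a})/B^N\cdot M(\mathbf{a}),\,V\bigr),
\]
which by the previous lemma is canonically the $k[\Sn_{\mathbf{a}}]$-submodule $W_{\mathbf{a}} \subseteq V_{\mathbf{a}}$ of those elements killed by some transition map whose target $[\mathbf{n}]$ satisfies $\max_i (n_i - a_i) \le N$. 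I will work with this concrete description $W_{\mathbf{a}} \subseteq V_{\mathbf{a}}$ throughout, since it makes the transition maps transparent.

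Next I would define the transition maps. Given a morphism $g = g_1 \times \cdots \times g_m : [\mathbf{a}] \to [\mathbf{b}]$ in $\FI^m$, I claim that the transition map $V(g) : V_{\mathbf{a}} \to V_{\mathbf{b}}$ carries $W_{\mathbf{a}}$ into $W_{\mathbf{b}}$; one then \emph{defines} $\fiHom(k\FI^m/B^N,V)(g)$ to be the restriction. To see the claim, take $v \in W_{\mathbf{a}}$, so $v$ is killed by some $f : [\mathbf{a}] \to [\mathbf{n}]$ with $\max_i(n_i - a_i) \le N$; since $\FI^m$ has the property that any injection from $[\mathbf{a}]$ extends along $g$ (i.e. there is $f' : [\mathbf{b}] \to [\mathbf{n}']$ together with $h : [\mathbf{n}] \to [\mathbf{n}']$ making the square commute, with $\mathbf{n}' = \mathbf{b} + (\mathbf{n} - \mathbf{a})$ obtained by adjoining the ``new'' points of $\mathbf{b}$ to $\mathbf{n}$), we get that $V(g)v$ is killed by $f'$, and $\max_i(n'_i - b_i) = \max_i(n_i - a_i) \le N$. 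Hence $V(g)v \in W_{\mathbf{b}}$. Functoriality of $\fiHom(k\FI^m/B^N,V)$ is then inherited from functoriality of $V$, since all structure maps are literally restrictions of those of $V$. Finally, the levelwise inclusions $W_{\mathbf{a}} \hookrightarrow V_{\mathbf{a}}$ assemble, by the very construction of the transition maps as restrictions, into a morphism of $\FI^m$-modules $\fiHom(k\FI^m/B^N,V) \hookrightarrow V$, which is a monomorphism because it is injective in each degree.

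I expect the main (and really the only) obstacle to be the verification that transition maps of $V$ genuinely preserve the submodules $W_{\mathbf{a}}$, i.e. the combinatorial extension-of-injections argument sketched above together with the bookkeeping that the ``defect'' $\max_i(n_i - a_i)$ is preserved under this extension. Everything else --- functoriality, $k[\Sn_{\mathbf{a}}]$-linearity of the identifications, and the assertion that the resulting map into $V$ is a monomorphism --- is formal once the submodules are known to be compatible with the $\FI^m$-action. One should also remark (or silently use) that this construction is manifestly functorial in $V$ and compatible with the surjections $k\FI^m/B^{N+1} \twoheadrightarrow k\FI^m/B^N$, so that one obtains a directed system $\fiHom(k\FI^m/B^N,V) \hookrightarrow \fiHom(k\FI^m/B^{N+1},V) \hookrightarrow \cdots$ whose colimit is the $B$-torsion submodule $H^0_B(V)$; this is presumably the point of stating the lemma, and it costs nothing extra to record it here.
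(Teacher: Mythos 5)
Your construction is correct and is essentially the paper's own: the paper defines the transition map by sending $\phi$ to the unique $\psi$ with $\psi(id_{\mathbf{n}}) = V(f_1\times\cdots\times f_m)(\phi(id_{\mathbf{a}}))$, which under the identification of the preceding lemma is exactly your restriction of the transition maps of $V$ to the submodules $W_{\mathbf{a}}$, and the inclusion into $V$ is then immediate in both treatments. Your pushout-of-injections argument (with the defect $\max_i(n_i-a_i)$ preserved) merely makes explicit the well-definedness check that the paper leaves implicit.
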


\begin{proof}
Let $f_1 \times \ldots \times f_m: [\mathbf{a}] \rightarrow [\mathbf{n}]$ be a morphism in $\FI^m$, and let $\phi \in \Hom(M(\mathbf{a})/B^N \cdot M(\mathbf{a}), V)$. Then we define the image of $\phi$ under the transition map induced by $f_1 \times \ldots \times f_m$ to be the morphism $\psi \in \Hom(M(\mathbf{n})/B^N \cdot M(\mathbf{n}), V)$ uniquely determined by the assignment $\psi(id_{\mathbf{n}}) = V(f_1 \times \ldots \times f_m)(\phi(id_{\mathbf{a}})).$\\

The second part of the statement follows immediately from the construction.\\
\end{proof}

These two lemmas lead into the following very natural alternative definition of local cohomology.\\

\begin{definition}
For each fixed $N > 0$, the functor $V \mapsto \fiHom(k\FI^m/B^N,V)$ is left exact. We denote the right derived functors of this functor by
\[
\fiExt^i(k\FI^m/B^N,\bullet).
\]
We note that for each $\mathbf{a} \in \N^m$ one has
\[
\fiExt^i(k\FI^m/B^N,V)_{\mathbf{a}} = \Ext^i(M(\mathbf{a})/B^N \cdot M(\mathbf{a}), V).
\]
\end{definition}

\begin{theorem}\label{altdef}
There is an isomorphism of functors
\[
\lim_{\rightarrow} \fiHom(k\FI^m/B^N,\bullet) \cong H^0_B(\bullet).
\]
inducing an isomorphism of derived functors
\[
\lim_{\rightarrow} \fiExt^i(k\FI^m/B^N,\bullet) \cong H^i_B(\bullet)
\]
for each $i \geq 0$.\\
\end{theorem}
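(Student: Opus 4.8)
\textbf{Proof proposal for Theorem \ref{altdef}.}

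The plan is to establish the isomorphism at the level of the $0$-th functors first, and then promote it to the derived functors via a standard argument about filtered colimits of $\delta$-functors. For the degree-zero statement, fix an $\FI^m$-module $V$ and a vector $\mathbf{a} \in \N^m$. By the first lemma above, $\fiHom(k\FI^m/B^N,V)_{\mathbf{a}}$ is the submodule of $V_{\mathbf{a}}$ consisting of elements killed by \emph{some} transition map whose target differs from $\mathbf{a}$ by at most $N$ in each coordinate. The transition maps in $N$ used to define $\fiHom(k\FI^m/B^{N+1},V)$ are obtained from those for $B^N$ by postcomposition, so the inclusions $\fiHom(k\FI^m/B^N,V) \hookrightarrow V$ are compatible and the system $\{\fiHom(k\FI^m/B^N,V)\}_N$ is a directed system of submodules of $V$ under inclusion. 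Its colimit is simply the union, which is precisely the set of elements of $V$ that are killed by \emph{some} transition map, i.e. the maximal $B$-torsion submodule $V_T = H^0_B(V)$. All of this is manifestly functorial in $V$ and compatible with the $\FI^m$-module structure, so $\lim_{\rightarrow} \fiHom(k\FI^m/B^N,\bullet) \cong H^0_B(\bullet)$ as functors.

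For the derived statement, I would argue as follows. Filtered colimits are exact in the category of $\FI^m$-modules (this is an AB5 Grothendieck category, and indeed colimits are computed pointwise where $k$-modules form an AB5 category), so $\lim_{\rightarrow}$ commutes with cohomology. Hence the sequence of functors $W \mapsto \lim_{\rightarrow} \fiExt^i(k\FI^m/B^N,W)$ forms a cohomological $\delta$-functor: each $\fiExt^\bullet(k\FI^m/B^N,\bullet)$ is a $\delta$-functor (being a derived functor), the connecting maps are compatible with the transition maps of the directed system in $N$, and passing to the exact colimit preserves long exact sequences. In degree $0$ this $\delta$-functor agrees with $H^0_B$ by the previous paragraph. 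To conclude it agrees with $H^\bullet_B = R^\bullet H^0_B$ in all degrees, it suffices to check that the colimit $\delta$-functor is \emph{effaceable} in positive degrees, i.e. that for any $V$ there is a monomorphism $V \hookrightarrow I$ with $\lim_{\rightarrow} \fiExt^i(k\FI^m/B^N,I) = 0$ for $i > 0$; one then invokes the universality of the derived functor $\delta$-functor $H^\bullet_B$ (any effaceable $\delta$-functor agreeing with $H^0_B$ in degree zero is canonically isomorphic to it).

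The natural candidate for $I$ is an injective $\FI^m$-module containing $V$, which exists since the category is Grothendieck. For such $I$ and each fixed $N$ we have $\fiExt^i(k\FI^m/B^N,I) = 0$ for $i>0$ because $I$ is injective and $\fiExt^\bullet(k\FI^m/B^N,\bullet)$ is computed by applying $\Hom$ into $I$; more precisely, pointwise $\fiExt^i(k\FI^m/B^N,I)_{\mathbf{a}} = \Ext^i_{\FI^m}(M(\mathbf{a})/B^N M(\mathbf{a}), I) = 0$ for $i > 0$ since $I$ is injective. Taking the colimit over $N$ then gives $0$, so injectives efface the colimit $\delta$-functor in positive degrees, and the comparison theorem for $\delta$-functors yields the desired isomorphisms $\lim_{\rightarrow} \fiExt^i(k\FI^m/B^N,\bullet) \cong H^i_B(\bullet)$ for all $i \geq 0$, compatibly with connecting maps.

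The main obstacle I anticipate is bookkeeping rather than conceptual difficulty: one must verify that the transition maps in the directed system $\{\fiHom(k\FI^m/B^N,V)\}_N$ are genuinely the inclusion maps (so that the colimit is an honest union, not something with nontrivial gluing), and that these inclusions are natural both in $V$ and as maps of $\FI^m$-modules, so that the colimit of $\delta$-functors makes sense and the degree-$0$ identification is an isomorphism of functors to $\FI^m$-modules and not merely pointwise. A secondary point requiring care is the exactness of filtered colimits in this functor category and its compatibility with the formation of derived functors, but this is entirely standard for Grothendieck categories and can be cited.
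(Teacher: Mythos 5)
Your argument is correct and is essentially the argument the paper intends: the paper's own proof is the one-line remark that the theorem ``follows from the previous lemmas and standard homological arguments,'' and your write-up simply makes those standard arguments explicit (the degree-zero identification of the colimit with the maximal $B$-torsion submodule via the two lemmas, exactness of filtered colimits in this Grothendieck category, and effaceability on injectives plus universality of $\delta$-functors). No gaps to report.
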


\begin{proof}
Follows from the previous lemmas and standard homological arguments.\\
\end{proof}

\subsection{Acyclic modules}

The purpose of this section is to classify the modules which are acyclic with respect to the local cohomology functors. As a consequence of this classification, we will be able to prove many different finiteness statements about local cohomology modules.\\

To begin, we prove the previously promised statement that semi-induced modules are local cohomology acyclic. The following argument is based on a similar argument of Nagpal appearing in \cite{N}.\\

\begin{proposition}
If $V$ is a semi-induced $\FI^m$-module, then $H^i_B(V) = 0$ for all $i \geq 0$.\\
\end{proposition}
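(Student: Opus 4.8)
The plan is to reduce to the case of a single induced module $M(W)$ and then show directly that $M(W)$ has no nonzero $B$-torsion and, more strongly, is local cohomology acyclic.

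First I would handle the reduction. If $V$ is semi-induced, it admits a finite filtration $0 = V_0 \subseteq V_1 \subseteq \ldots \subseteq V_t = V$ with each quotient $V_s/V_{s-1} \cong M(W_s)$ induced. Given the long exact sequences in local cohomology associated to the short exact sequences $0 \to V_{s-1} \to V_s \to M(W_s) \to 0$, an induction on $t$ reduces the claim to the case $V = M(W)$: if $H^i_B(V_{s-1}) = 0$ and $H^i_B(M(W_s)) = 0$ for all $i$, then $H^i_B(V_s) = 0$ for all $i$.

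Next, for $V = M(W)$, I would first observe that $M(W)$ is $B$-torsion-free, so $H^0_B(M(W)) = 0$. By the third part of Proposition \ref{torsionprop}, this amounts to checking that the natural map $M(W) \to \Sigma_{(1,\ldots,1)} M(W)$ is injective; but by part (2) of Theorem \ref{shiftthm} each map $M(W) \to \Sigma_j M(W)$ is a split injection (since $M(W)$ is induced), and composing these splits injections for $j = 1,\ldots,m$ gives that $M(W) \to \Sigma_{(1,\ldots,1)} M(W)$ is a (split) injection. For the higher local cohomology I would pass to an injective resolution $0 \to M(W) \to I^0 \to I^1 \to \ldots$. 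The key point is that the functor $\Sigma_{(1,\ldots,1)}$ is exact (it is given pointwise by reindexing) and, by part (5) of Theorem \ref{shiftthm}, preserves injective objects; hence $0 \to \Sigma_{(1,\ldots,1)} M(W) \to \Sigma_{(1,\ldots,1)} I^0 \to \ldots$ is again an injective resolution. Since $M(W)$ is $B$-torsion-free, $H^0_B$ applied to this resolution vanishes, and I want to leverage the fact that, by part (4) of Theorem \ref{shiftthm} together with part (1) (which shows $\Sigma_j M(W)$ is again a finite direct sum of induced modules), sufficiently large shifts $\Sigma_{\mathbf{a}} M(W)$ are still semi-induced and in particular $B$-torsion-free. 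Concretely, I expect the cleanest route is: use Theorem \ref{altdef} to compute $H^i_B(M(W)) = \varinjlim_N \fiExt^i(k\FI^m/B^N, M(W))$, and show each term in the limit vanishes for $i > 0$ by exhibiting, via the shift functors, that $\Ext^i(M(\mathbf{a})/B^N \cdot M(\mathbf{a}), M(W)) = 0$ — the modules $M(\mathbf{a})/B^N \cdot M(\mathbf{a})$ being $B$-torsion and $M(W)$ being, after a large enough shift, $B$-torsion-free with an injective resolution by $B$-torsion-free modules. Alternatively, and perhaps more in the spirit of Nagpal's argument, one dualizes: an element of $H^i_B(M(W))$ is killed by passing to a large shift $\Sigma_{\mathbf{a}}$, because $H^i_B$ vanishes on $B$-torsion-free modules "at infinity"; since $M(W) \to \Sigma_{\mathbf{a}} M(W)$ is split injective, $H^i_B(M(W))$ is a direct summand of $H^i_B(\Sigma_{\mathbf{a}} M(W))$, which one shows is zero.

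The main obstacle I anticipate is making rigorous the step that higher local cohomology is detected by torsion "at infinity," i.e.\ that $H^i_B(U) = 0$ whenever $U$ is $B$-torsion-free, or more precisely controlling how $H^i_B$ interacts with the shift functors — the cleanest statement would be an isomorphism $H^i_B(\Sigma_j U) \cong \Sigma_j H^i_B(U)$, which follows once one knows $\Sigma_j$ is exact and preserves injectives (Theorem \ref{shiftthm}(5)) and commutes with $H^0_B$ on the relevant modules. Granting that, the argument closes: $H^i_B(M(W))$ embeds as a summand of $H^i_B(\Sigma_{\mathbf{a}} M(W)) \cong \Sigma_{\mathbf{a}} H^i_B(M(W))$, and since $M(W)$ and all its shifts are $B$-torsion-free semi-induced modules, iterating drives $H^i_B(M(W))$ into arbitrarily high degrees, forcing it to vanish for every $i \geq 0$.
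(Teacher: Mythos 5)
The viable core of your proposal is the final paragraph, and it is essentially the paper's own argument: reduce to a single induced module $M(W)$ via the filtration and long exact sequences, use that $\Sigma_j$ is exact and preserves injectives (Theorem \ref{shiftthm}) to get $H^i_B(\Sigma_j U) \cong \Sigma_j H^i_B(U)$, and use the split injection $M(W) \to \Sigma_j M(W)$ to realize $H^i_B(M(W))$ as a split summand of $\Sigma_j H^i_B(M(W))$. However, your closing inference --- that iterating ``drives $H^i_B(M(W))$ into arbitrarily high degrees, forcing it to vanish'' --- does not close the argument: a nonzero module can inject into all of its shifts and be supported in arbitrarily high degrees ($M(W)$ itself does exactly this), so ``living at infinity'' is not a contradiction by itself. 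The missing ingredient, which is what the paper actually uses, is that every local cohomology module is itself $B$-torsion: $H^i_B(V)$ is a subquotient of $H^0_B(I^\bullet)$ for an injective resolution $I^\bullet$, and $B$-torsion modules are closed under subquotients (alternatively, use the description of Theorem \ref{altdef}). Granting that, injectivity of $H^i_B(M(W)) \to \Sigma_j H^i_B(M(W))$ for every $j$ says $H^i_B(M(W))$ has no $B$-torsion in any direction, hence is $B$-torsion-free by Proposition \ref{torsionprop}, and a module that is simultaneously $B$-torsion and $B$-torsion-free is zero. Note you cannot instead invoke finite generation of $H^i_B(M(W))$ together with Proposition \ref{torsionprop}(2): finiteness of local cohomology is Corollary \ref{finite}, which is proved later and depends on this very proposition.

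The other routes you float should be discarded, as they rest on false or circular claims. The ``cleanest statement'' you propose, that $H^i_B(U) = 0$ whenever $U$ is $B$-torsion-free, is false: the paper's example module $K$ is $B$-torsion-free (it is a submodule of $M(0,0)$) yet $H^1_B(K) \neq 0$, and Theorem \ref{acyclicclass} says acyclicity requires the torsion-free quotient to be semi-induced, not merely torsion-free. Similarly, the claim that $\Ext^i(M(\mathbf{a})/B^N \cdot M(\mathbf{a}), M(W)) = 0$ for $i > 0$ because the source is $B$-torsion and the target torsion-free is unjustified --- higher Ext from torsion into torsion-free modules does not vanish in general (in the classical multigraded analogue, $\Ext^i_R(R/\mathfrak{m}^N, R)$ is nonzero in top degree, which is exactly why free modules over polynomial rings are not local cohomology acyclic, in contrast with the present setting) --- and ``one shows $H^i_B(\Sigma_{\mathbf{a}} M(W))$ is zero'' is circular, since $\Sigma_{\mathbf{a}} M(W)$ is again semi-induced, i.e.\ precisely the statement being proved. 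So: keep the shift/split-injection mechanism, and finish by observing that local cohomology modules are $B$-torsion.
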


\begin{proof}
It suffices to prove the statement for $V = M(W)$, where $W$ is some $k[\Sn_{\mathbf{r}}]$-module. Fix $1 \leq j \leq m$, an recall from Theorem \ref{shiftthm} that there is a split injection
\[
V \rightarrow \Sigma_jV.
\]
On the other hand, there are natural isomorphisms
\[
H^0_B(\Sigma_j V) \cong \Sigma_j H^0_B(V),
\]
which induce isomorphisms
\[
H^i_B(\Sigma_j V) \cong \Sigma_j H^i_B(V).
\]
Indeed, this follows from the fact that $\Sigma_j$ is exact, and that it preserves injective objects by the last part of Theorem \ref{shiftthm}. It then follows that the map
\[
H^i_B(V) \rightarrow H^i_B(\Sigma_j V) \cong \Sigma_j H^i_B(V)
\]
is injective. In particular, the module $H^i_B(V)$ has no $B$-torsion in direction $j$. Because $j$ is arbitrary, it follows that $H^i_B(V)$ is actually $B$-torsion-free. The only way this is possible is if $H^i_B(V) = 0$ when $i \geq 0$, as desired.\\
\end{proof}

The second class of modules whose positive local cohomology groups vanish are $B$-torsion-modules themselves. To accomplish this, we use the alternative definition of local cohomology provided by Theorem \ref{altdef}.\\

\begin{proposition}\label{tortrivial}
Let $V$ be a finitely generated $B$-torsion module. Then $H^i_B(V) = 0$ for all $i \geq 1$.\\
\end{proposition}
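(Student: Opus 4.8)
The plan is to use the alternative description of local cohomology as a direct limit of $\fiExt$-modules provided by Theorem \ref{altdef}, and to show that each term $\fiExt^i(k\FI^m/B^N, V)$ already vanishes for $i \geq 1$ when $V$ is a finitely generated $B$-torsion module; since a direct limit of zero modules is zero, this gives the claim. Pointwise at a vector $\mathbf{a} \in \N^m$, we have $\fiExt^i(k\FI^m/B^N, V)_{\mathbf{a}} = \Ext^i(M(\mathbf{a})/B^N\cdot M(\mathbf{a}), V)$, so it suffices to understand the projective dimension of the $\FI^m$-module $M(\mathbf{a})/B^N\cdot M(\mathbf{a})$, or rather to control $\Ext^i$ out of it against a $B$-torsion target.

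First I would observe that $M(\mathbf{a})/B^N\cdot M(\mathbf{a})$ is itself a finitely generated $B$-torsion module: by construction $B^N\cdot M(\mathbf{a})$ is generated by $M(\mathbf{a})_{\mathbf{a}+N\cdot(1,\dots,1)}$, and every element of the quotient in degree $\mathbf{n}$ is killed by any transition map that increases each coordinate past the threshold $\mathbf{a}+N\cdot(1,\dots,1)$ — more precisely, by Proposition \ref{torsionprop}(2) applied after identifying the relevant truncation, the quotient is supported on only finitely many $\mathbf{n}$ (those with some $n_i < a_i + N$ is not quite it; rather one uses that beyond the threshold everything dies), hence it is a finitely generated module concentrated in finitely many degrees. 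The key structural point I would then invoke is that a finitely generated $\FI^m$-module concentrated in finitely many degrees — equivalently, a finitely generated $B$-torsion module, using Proposition \ref{torsionprop}(2) — admits a \emph{finite} resolution by semi-induced modules: indeed, by Proposition \ref{inducedproperties}(2) one can surject a finitely generated semi-induced module onto it, the kernel is again finitely generated and $B$-torsion (being a submodule of a $B$-torsion module), of strictly smaller "$\td$", so induction on $\td(V)$ terminates. This shows every finitely generated $B$-torsion module has finite $\FI^m$-homological dimension bounded in terms of its support.

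The main obstacle, and the heart of the argument, is the following orthogonality statement: if $P$ is semi-induced and $V$ is $B$-torsion, then $\Hom_{\FI^m}(P, V) = 0$, and moreover $\Ext^i_{\FI^m}(P, V) = 0$ for all $i$ whenever $P$ is semi-induced — wait, that is too strong since semi-induced modules need not be projective in the category of \emph{all} $\FI^m$-modules. Let me instead argue directly: using the semi-induced resolution $\cdots \to P^{(1)} \to P^{(0)} \to M(\mathbf{a})/B^N\cdot M(\mathbf{a}) \to 0$, which is finite, and the fact (Proposition \ref{inducedproperties}(1)) that $\Hom_{\FI^m}(M(W), V) \cong \Hom_{k[\Sn_{\mathbf{r}}]}(W, V_{\mathbf{r}})$, I would compute $\Ext^i(M(\mathbf{a})/B^N\cdot M(\mathbf{a}), V)$ as the cohomology of the complex $\Hom(P^{(\bullet)}, V)$ \emph{provided} the $P^{(i)}$ are acyclic for $\Hom(-, V)$; this acyclicity is exactly the subtlety. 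The cleanest route — and the one I would actually pursue — is to turn it around: replace $V$ by an injective resolution $V \to I^\bullet$, and use that $\Sigma_{\mathbf{a}}$ preserves injectives (Theorem \ref{shiftthm}(5)) together with the exactness of $\Sigma_{\mathbf{a}}$, to reduce, after applying a large shift, to the case where $V$ has been replaced by a semi-induced-acyclic situation; but since $\Sigma_{\mathbf{a}}V = 0$ for $\mathbf{a}$ large (as $V$ is $B$-torsion with bounded support, by Proposition \ref{torsionprop}(2)), the limit computing $H^i_B(V)$ stabilizes and one reads off vanishing. In fact the slickest formulation: since $V$ is $B$-torsion, $H^0_B(V) = V$, and the natural map $\fiHom(k\FI^m/B^N, V) \to V$ is an isomorphism once $N \geq \td(V)$; the derived functors $\fiExt^i(k\FI^m/B^N, V)$ for such $N$ are then computed by an injective resolution of $V$ on which $\fiHom(k\FI^m/B^N, -)$ acts as the identity for $N$ large (since each injective term, being the injective hull of a module, is handled by the same truncation estimate), giving $\fiExt^i(k\FI^m/B^N, V) = 0$ for $i \geq 1$; passing to the limit over $N$ and applying Theorem \ref{altdef} yields $H^i_B(V) = 0$ for $i \geq 1$. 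The step I expect to require the most care is verifying that $\fiHom(k\FI^m/B^N, I) = I$ for $N$ sufficiently large when $I$ is an injective appearing in a resolution of a finitely generated $B$-torsion module — i.e. controlling the "torsion degree" of the relevant injectives — and this is where I would spend the bulk of the write-up.
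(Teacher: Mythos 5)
Your opening move (pass to $\fiExt^i(k\FI^m/B^N,V)$ via Theorem \ref{altdef} and kill each term for $N$ large) is exactly the paper's starting point, but the two routes you then sketch both break down. First, the claimed finite semi-induced resolution of a finitely generated $B$-torsion module is unfounded: the kernel of a surjection from a semi-induced module onto $V$ is a submodule of the \emph{semi-induced} module, hence $B$-torsion-free, not ``again $B$-torsion,'' so the induction on $\td$ never gets started. (Relatedly, $M(\mathbf{a})/B^N\cdot M(\mathbf{a})$ is not concentrated in finitely many degrees when $m\geq 2$; it is merely zero once all coordinates exceed $a_i+N$.) Second, the shift-functor detour is a non sequitur: from $\Sigma_{\mathbf{a}}V=0$ and $H^i_B(\Sigma_{\mathbf{a}}V)\cong\Sigma_{\mathbf{a}}H^i_B(V)$ you only learn that $H^i_B(V)$ vanishes in degrees $\geq\mathbf{a}$, i.e.\ that it is $B$-torsion --- which local cohomology modules always are --- not that it vanishes. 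Finally, your ``slickest formulation'' rests entirely on the unproved assertion that an injective resolution of $V$ can be taken so that $\fiHom(k\FI^m/B^N,-)$ acts as the identity on its terms, i.e.\ essentially that injective hulls (and cosyzygies) of $B$-torsion modules are again $B$-torsion with controlled torsion degree. Nothing in the paper supplies this, the cited results do not give it for $\FI^m$, and it carries the whole weight of the proposition; you correctly flag it as the hard step, but a proof that defers its only real difficulty to an unverified structural claim about injectives is not a proof.

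For contrast, the paper's argument never touches injective resolutions of $V$. It chooses $N$ large relative to $\td(V)$, applies $\Hom(-,V)$ to the short exact sequence $0\to B^N\cdot M(\mathbf{a})\to M(\mathbf{a})\to M(\mathbf{a})/B^N\cdot M(\mathbf{a})\to 0$, and uses projectivity of $M(\mathbf{a})$ to identify $\Ext^i(M(\mathbf{a})/B^N\cdot M(\mathbf{a}),V)$ with (a quotient of) $\Ext^{i-1}(B^N\cdot M(\mathbf{a}),V)$; the latter is then computed from a projective (hence semi-induced) resolution of $B^N\cdot M(\mathbf{a})$ generated in degrees at least one positive step beyond $\mathbf{a}+N\cdot(1,\ldots,1)$, where $V$ vanishes, so all the relevant $\Hom$ groups are zero. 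If you want to salvage your write-up, replace both of your structural claims with this degree-shifting argument on the $B^N$ side, which is where the hypothesis that $V$ is finitely generated $B$-torsion actually enters.
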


\begin{proof}
Fix $\mathbf{a} \in \N^m$, and $i \geq 1$. It will suffice to prove that for all $N \gg 0$,
\[
\Ext^i(M(\mathbf{a})/B^N \cdot M(\mathbf{a}),V) = 0
\]
Choose $N$ sufficiently large so that $\Hom (B^N\cdot M(\mathbf{a})_{\mathbf{n}}, V) = 0$ whenever $V_{\mathbf{n}} \neq 0$. Note that for such an $N$ to exist, it is critical that $V$ be finitely generated. We may apply $\Hom(\bullet,V)$ to the exact sequence
\[
0 \rightarrow B^N\cdot M(\mathbf{a}) \rightarrow M(\mathbf{a}) \rightarrow M(\mathbf{a})/B^N\cdot M(\mathbf{a}) \rightarrow 0
\]
to conclude the existence of the following exact sequences,
\begin{eqnarray}
&\Hom(B^N\cdot M(\mathbf{a}),V) \rightarrow \Ext^1(M(\mathbf{a})/B^N\cdot M(\mathbf{a}),V) \rightarrow 0&\\
&0 \rightarrow \Ext^{i-1}(B^N\cdot M(\mathbf{a}),V) \rightarrow  \Ext^i(M(\mathbf{a})/B^N\cdot M(\mathbf{a}),V) \rightarrow 0& \hspace{1cm} (i \geq 2)
\end{eqnarray}
The means by which $N$ was chosen implies that $\Hom(B^N\cdot M(\mathbf{a}),V) = 0$, whence $\Ext^1(M(\mathbf{a})/B^N\cdot M(\mathbf{a}),V) = 0$. For $i \geq 2$, the above tells us that it suffices to compute $\Ext^{i-1}(B^N\cdot M(\mathbf{a}),V)$. These modules are subquotients of modules of the form $\Hom(F,V)$, where $F$ is a projective module which is generated in degrees which are at least one positive step from $\mathbf{a}+N\cdot(1,\ldots,1)$. Once again relying on the choice of $N$, as well as the classification of projective modules as being semi-induced, we conclude that these extension groups must be zero, as desired.\\
\end{proof}

These two propositions are the main ingredients in the following classification theorem.\\

\begin{theorem}\label{acyclicclass}
Let $V$ be a finitely generated $\FI^m$-module. Then $V$ is acyclic with respect to the positive local cohomology functors if and only if it falls into an exact sequence of the form
\[
0 \rightarrow V_T \rightarrow V \rightarrow V_F \rightarrow 0
\]
where $V_F$ is semi-induced, and $V_T$ is $B$-torsion.\\
\end{theorem}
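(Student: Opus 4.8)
The plan is to prove the two directions separately, with the ``if'' direction being essentially immediate from the two preceding propositions, and the ``only if'' direction requiring the structural input of Theorem \ref{cechcomplex}. For the ``if'' direction: given an exact sequence $0 \to V_T \to V \to V_F \to 0$ with $V_T$ being $B$-torsion and $V_F$ semi-induced, I would apply the long exact sequence in local cohomology. By the first proposition of this section, $H^i_B(V_F) = 0$ for all $i \geq 0$, and by Proposition \ref{tortrivial}, $H^i_B(V_T) = 0$ for all $i \geq 1$. Feeding these into the long exact sequence
\[
\cdots \to H^i_B(V_T) \to H^i_B(V) \to H^i_B(V_F) \to \cdots
\]
immediately forces $H^i_B(V) = 0$ for all $i \geq 1$, which is exactly the statement that $V$ is acyclic with respect to the positive local cohomology functors.

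For the ``only if'' direction, suppose $V$ is finitely generated and $H^i_B(V) = 0$ for all $i \geq 1$. By Proposition \ref{torsionprop}, we always have the canonical exact sequence $0 \to V_T \to V \to V_F \to 0$ with $V_T$ $B$-torsion and $V_F$ $B$-torsion-free; the content to be extracted is that $V_F$ is in fact \emph{semi-induced}. Since $V_T$ is finitely generated $B$-torsion, Proposition \ref{tortrivial} gives $H^i_B(V_T) = 0$ for $i \geq 1$, so the long exact sequence shows $H^i_B(V_F) = 0$ for $i \geq 1$ as well; also $H^0_B(V_F) = 0$ since $V_F$ is $B$-torsion-free. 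Thus it suffices to prove: a finitely generated, $B$-torsion-free, positively-local-cohomology-acyclic $\FI^m$-module is semi-induced. For this I would invoke Theorem \ref{cechcomplex} to obtain a bounded complex $\filcom V_F : 0 \to V_F \to F^{(0)} \to \cdots \to F^{(l)} \to 0$ with all $F^{(i)}$ semi-induced and all cohomology $B$-torsion. The strategy is to argue, by an induction on the length $l$ (or on a spectral-sequence / hypercohomology argument applied to $\filcom V_F$), that the vanishing $H^i_B(V_F) = 0$ together with $H^0_B(V_F) = 0$ forces the augmented complex $0 \to V_F \to F^{(0)} \to \cdots$ to be, up to $B$-torsion, a resolution — and since local cohomology of the semi-induced $F^{(i)}$ all vanishes, the hypercohomology of $\filcom V_F$ computes $H^\bullet_B(V_F)$ on one hand and, via the (torsion) cohomology sheaves of $\filcom V_F$ on the other, forces those torsion cohomology modules to be ``invisible'' in a way that pins $V_F$ down. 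Concretely, I expect one shows the truncated complex $0 \to \im(V_F \to F^{(0)}) \to F^{(0)} \to \cdots$ has semi-induced kernel by peeling off $F^{(0)}$ and reducing the length, using Theorem \ref{shiftthm}(4) (eventual semi-inducedness of shifts) and Proposition \ref{inducedproperties}(4) (acyclicity for $\FI^m$-homology characterizes semi-induced) to identify the relevant syzygy as semi-induced.

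The main obstacle, and the step I would spend the most care on, is precisely this last reduction: extracting ``$V_F$ semi-induced'' from ``$V_F$ has a finite semi-induced coresolution with $B$-torsion cohomology and vanishing positive local cohomology.'' The subtlety is that $B$-torsion cohomology modules are themselves acyclic for positive local cohomology (Proposition \ref{tortrivial}), so they are ``invisible'' to $H^{>0}_B$; one must leverage $H^0_B(V_F) = 0$ together with the finiteness of $\td$ (Proposition \ref{torsionprop}(2)) and the shift functor's behavior on torsion and on semi-induced modules (Theorem \ref{shiftthm}(1)--(4)) to bootstrap from ``$V_F$ agrees with a semi-induced module after a large shift'' back to ``$V_F$ is semi-induced on the nose.'' A clean way to organize this may be: apply $\Sigma_{\mathbf{a}}$ for $\mathbf{a}$ large, where $\Sigma_{\mathbf{a}} V_F$ is semi-induced by Theorem \ref{shiftthm}(4) and $\Sigma_{\mathbf{a}}$ commutes with $H^i_B$ and with the formation of $V_T$; then descend by analyzing the cokernels $\coker(V_F \to \Sigma_{e_j} V_F)$, which are controlled by Theorem \ref{shiftthm}(2)--(3) and, for $V_F$ torsion-free, relate $V_F$ to its shift via an injection. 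This descent is where I anticipate the real work lies, and it may ultimately be cleanest to cite or adapt the corresponding $\FI$-module argument from \cite{LR} and its $\FI^m$ generalization in \cite{LY2}.
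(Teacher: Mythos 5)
Your overall skeleton matches the paper's: the backward direction via the two acyclicity propositions and the long exact sequence is correct and complete, and your reduction of the forward direction — pass to $0 \to V_T \to V \to V_F \to 0$, kill $H^i_B(V_T)$ by Proposition \ref{tortrivial}, and reduce to the claim that a finitely generated module with $H^i_B = 0$ for \emph{all} $i \geq 0$ (i.e.\ $V_F$, which is torsion-free and positively acyclic) is semi-induced — is exactly the paper's reduction. The genuine gap is that this last claim, which is the real content of the theorem, is never proved. You sketch a hypercohomology argument whose conclusion is left at ``pins $V_F$ down,'' and then pivot to a shift-and-descend plan via $\Sigma_{\mathbf{a}}$ for large $\mathbf{a}$ and the cokernels $\coker(V_F \to \Sigma_{e_j}V_F)$, which you explicitly defer to the literature. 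Neither route is carried through, and Theorem \ref{shiftthm}(4) is not the relevant tool at this stage: its role is already absorbed into the proof of Theorem \ref{cechcomplex}, which you are citing as a black box. Saying the augmented complex is ``up to $B$-torsion a resolution'' also gains nothing, since that holds by construction for \emph{every} finitely generated module.

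The missing mechanism is the identification of the cohomology of $\filcom V_F$ with the local cohomology of $V_F$. Since $V_F$ is torsion-free, $V_F \to F^{(0)}$ is injective; the torsion submodule of its cokernel is $H^1_B(V_F)$, and iterating this (it is precisely the content of the subsequent Theorem \ref{cechcomplexlc}) gives $H^{i-1}(\filcom V_F) \cong H^i_B(V_F)$ for all $i$. Hence the total vanishing of $H^\bullet_B(V_F)$ (including $H^0_B$, from torsion-freeness) forces $\filcom V_F$ to be exact \emph{on the nose}. Once it is exact, split it into short exact sequences and apply the $\FI^m$-homology functors: each $F^{(i)}$ is semi-induced, hence homology-acyclic by Proposition \ref{inducedproperties}(4), so dimension shifting gives $H_s(V_F) = 0$ for all $s \geq 1$, and the same proposition then says $V_F$ is semi-induced. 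This short finish is what your proposal lacks; your spectral-sequence variant could be completed along the same lines, but as written it stops short of extracting exactness of $\filcom V_F$, which is the decisive point.
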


\begin{proof}
The two previous propositions imply the backward direction. Otherwise, $V$ can always be fit into an exact sequence
\[
0 \rightarrow V_T \rightarrow V \rightarrow V_F \rightarrow 0
\]
where $V_T$ is $B$-torsion and $V_F$ is $B$-torsion-free. It therefore suffices to prove the following claim: If $V$ is a finitely generated $\FI^m$-module for which $H^i_B(V) = 0$ for \textbf{all} $i \geq 0$, then $V$ is semi-induced.\\

Indeed, using the previous propositions it isn't hard to see that in this case the complex
\[
\filcom V: 0 \rightarrow V \rightarrow F^{(0)} \rightarrow \ldots \rightarrow F^{(l)} \rightarrow 0
\]
of Theorem \ref{cechcomplex} is exact. Applying the homology functor and using Theorem \ref{inducedproperties}, we may therefore conclude that $V$ is acyclic with respect to the homology functors and is therefore semi-induced by Theorem \ref{inducedproperties} as well.\\
\end{proof}

\subsection{Consequences of the acyclic classification}

In this section we list consequences of the classification of local cohomology acyclic modules. The first consequence was already hinted at in the proof of the classification.

\begin{theorem}\label{cechcomplexlc}
Let $V$ be a finitely generated $\FI^m$-module. Then for each $i \geq 0$, there are isomorphisms
\[
H^{i-1}(\filcom V) \cong H^i_B(V)
\]
\end{theorem}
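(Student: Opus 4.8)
The plan is to exploit the complex $\filcom V$ from Theorem \ref{cechcomplex} as an acyclic resolution for the $B$-torsion functor. Concretely, I would first observe that each term $F^{(i)}$ of $\filcom V$ is semi-induced, hence local cohomology acyclic by the first proposition of Section 3.2, and moreover $H^0_B(F^{(i)}) = 0$ for the same reason. The key formal input is the hyperderived functor (hypercohomology) spectral sequence of the functor $H^0_B(\bullet)$ applied to the complex $0 \to F^{(0)} \to \cdots \to F^{(l)} \to 0$, which we think of as resolving $V$ in the derived category since the augmentation $V \to \filcom V$ has $B$-torsion cohomology. I would set up the two spectral sequences converging to the hypercohomology $\mathbb{H}^\bullet$ of this complex: one with $E_1$ page the cohomology of $H^i_B(F^{(\bullet)})$, which degenerates because the $F^{(i)}$ are acyclic and torsion-free, leaving only $\mathbb{H}^\bullet(\filcom V) = 0$ in the relevant range (the complex of the $F^{(i)}$ alone, without $V$); and the other with $E_2$ page $H^p_B(H^q(\text{complex of }F^{(i)}))$.

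The cleaner route, which avoids spectral sequence bookkeeping, is the following. Let $\mathcal{F}$ denote the complex $0 \to F^{(0)} \to \cdots \to F^{(l)} \to 0$ with $F^{(0)}$ in degree $0$, so that $\filcom V$ is the cone (shifted) of a map $V \to \mathcal{F}$. Since the cohomology of $\filcom V$ is $B$-torsion, Proposition \ref{tortrivial} gives $H^i_B$ of each cohomology module vanishing for $i \geq 1$, and $H^0_B$ acts as the identity on $B$-torsion modules. First I would argue that $\mathcal{F}$, as a complex of $H^0_B$-acyclic objects, satisfies $\mathbb{R}H^0_B(\mathcal{F}) \cong H^0_B(\mathcal{F})$ computed termwise; but $H^0_B(F^{(i)}) = 0$ for all $i$, so $\mathbb{R}H^0_B(\mathcal{F}) = 0$ in the derived category. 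On the other hand, from the distinguished triangle relating $V$, $\mathcal{F}$, and $\filcom V$ (suitably shifted), applying $\mathbb{R}H^0_B$ yields a long exact sequence relating $H^i_B(V)$, the (vanishing) $H^i_B$ of $\mathcal{F}$, and the local cohomology of $\filcom V$. Because each cohomology module $H^i(\filcom V)$ is finitely generated $B$-torsion, its higher local cohomology vanishes and its $H^0_B$ is itself, so a second short spectral sequence or a direct dévissage up the (bounded) complex $\filcom V$ identifies $\mathbb{H}^i(\mathbb{R}H^0_B(\filcom V))$ with $H^i(\filcom V)$. Matching degrees across the triangle gives $H^i_B(V) \cong H^{i-1}(\filcom V)$, the shift by one coming from the placement of $V$ in degree $-1$ relative to $\mathcal{F}$.

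The step I expect to be the main obstacle is the careful handling of the degree shift and the verification that the ``dévissage up $\filcom V$'' genuinely produces $H^{i-1}$ rather than $H^i$ or some reindexed version; this requires pinning down precisely where $V$ sits in the complex $\filcom V$ (it is in cohomological degree $-1$ if we declare $F^{(0)}$ to be in degree $0$) and tracking the connecting maps in the long exact sequence. A related subtlety is justifying that the termwise computation of $\mathbb{R}H^0_B$ on the complex $\mathcal{F}$ is legitimate: this uses that a bounded complex of acyclic objects computes the derived functor, which is standard but should be invoked explicitly (e.g.\ via the first hypercohomology spectral sequence degenerating at $E_2$). Once these indexing matters are settled, the argument is purely formal, relying only on Theorem \ref{cechcomplex}, Proposition \ref{tortrivial}, and the acyclicity of semi-induced modules established earlier in this section.
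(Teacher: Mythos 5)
Your argument is correct, but it proceeds along a genuinely different route from the paper. The paper's proof is an explicit dimension-shifting induction tied to the recursive construction of $\filcom V$: it splits $V$ into its torsion part $V_T$ and torsion-free quotient $V_F$, notes $H^i_B(V)\cong H^i_B(V_F)$ for $i\geq 1$, embeds $V_F$ into $F^{(0)}=\Sigma_{\mathbf a}V_F$ (semi-induced, hence acyclic with $H^0_B=0$), reads off $H^0(\filcom V)=H^0_B(Q^{(0)})\cong H^1_B(V_F)\cong H^1_B(V)$ from the long exact sequence, and iterates on $Q^{(0)}$. You instead run a formal derived-functor argument: the bounded complex $\mathcal F$ of semi-induced terms is $\Gamma_B$-acyclic termwise with $H^0_B(F^{(i)})=0$, so $\mathbb RH^0_B(\mathcal F)=0$; the triangle $V\to\mathcal F\to\filcom V$ then gives $\mathbb RH^0_B(\filcom V)\cong \mathbb RH^0_B(V)[1]$; and since the cohomology of $\filcom V$ is finitely generated (local Noetherianity) and $B$-torsion, Proposition \ref{tortrivial} collapses the hypercohomology spectral sequence $H^p_B(H^q(\filcom V))$, identifying $\mathbb RH^0_B(\filcom V)$ with $\filcom V$ itself. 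Both arguments rest on the same two pillars (acyclicity of semi-induced modules and of finitely generated torsion modules), but yours has the advantage of applying verbatim to \emph{any} complex satisfying the conclusions of Theorem \ref{cechcomplex}, not just the one produced by the shift construction --- which is relevant since $V\mapsto\filcom V$ is not canonical --- while the paper's route avoids hypercohomology machinery entirely and uses only ordinary long exact sequences. Your flagged indexing concern is handled correctly: with $F^{(0)}$ in degree $0$ and $V$ in degree $-1$, $\filcom V$ is the cone of $V\to\mathcal F$, and the shift $[1]$ produces exactly $H^{i-1}(\filcom V)\cong H^i_B(V)$.
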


\begin{proof}
We have an exact sequence
\[
0 \rightarrow V_T \rightarrow V \rightarrow V_F \rightarrow 0
\]
where $V_T$ is $B$-torsion and $V_F$ is $B$-torsion-free. Thus $H^i_B(V) = H^i_B(V_F)$ for all $i \geq 1$. The module $V_F$ admits an injection
\[
0\rightarrow V_F \rightarrow \Sigma_{\mathbf{a}} V_F = F^{(0)} \rightarrow Q^{(0)} \rightarrow 0
\]
for some $\mathbf{a} \in \N^m$. We have that $H^0(\filcom V) = H^0_B(Q^{(0)}) = H^1_B(V_F) = H^1_B(V)$. Repeating this argument for $Q^{(0)}$, we conclude the desired isomorphisms for each $i$.\\
\end{proof}

As an immediate consequence for this theorem, we will be able to prove a variety of finiteness statements.\\

\begin{corollary}\label{finite}
Let $V$ be a finitely generated $\FI^m$-module. Then
\begin{enumerate}
\item $H^i_B(V)$ is finitely generated for all $i \geq 0$;
\item $H^i_B(V) = 0$ for all $i \gg 0$;
\end{enumerate}
\end{corollary}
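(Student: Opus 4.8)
The plan is to deduce both finiteness statements directly from Theorem \ref{cechcomplexlc}, which identifies $H^i_B(V)$ with the cohomology of the bounded complex $\filcom V$. First I would recall that by Theorem \ref{cechcomplex} the complex $\filcom V$ consists of finitely many terms $F^{(0)}, \ldots, F^{(l)}$, each of which is a finitely generated semi-induced $\FI^m$-module. Since the category of $\FI^m$-modules over the Noetherian ring $k$ is locally Noetherian (the Noetherian property quoted after Definition \ref{fg}), every cohomology group $H^{i}(\filcom V)$ is a subquotient of a finitely generated module, hence finitely generated. Combining this with the isomorphism $H^{i}(\filcom V) \cong H^{i+1}_B(V)$ yields that $H^j_B(V)$ is finitely generated for all $j \geq 1$. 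For $j = 0$ one argues separately: $H^0_B(V) = V_T$ is a submodule of the finitely generated module $V$, hence finitely generated by the Noetherian property again. This settles part (1).

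For part (2), I would again invoke Theorem \ref{cechcomplexlc}: the complex $\filcom V$ is \emph{bounded}, say $F^{(i)} = 0$ for $i > l$, so $H^{i}(\filcom V) = 0$ for $i > l$, and therefore $H^{i}_B(V) = H^{i-1}(\filcom V) = 0$ for all $i > l + 1$. Thus one may take the vanishing bound to be $l + 2$ (or more precisely $l+1$), where $l$ is the length of the complex $\filcom V$ produced by Theorem \ref{cechcomplex}. This is exactly the analogue of the statement that local cohomology over a polynomial ring vanishes above the dimension, with the length of the Čech-type complex $\filcom V$ playing the role of the dimension bound.

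The argument is essentially a bookkeeping exercise once Theorem \ref{cechcomplexlc} is in hand, so there is no real obstacle; the one point deserving care is making sure the indexing shift in $H^{i-1}(\filcom V) \cong H^i_B(V)$ is handled correctly at the low end (the $i = 0$ and $i = 1$ cases), which is why the $H^0_B(V) = V_T$ case is treated by the independent submodule argument rather than via the complex. I would also remark that the bound in (2) is not claimed to be sharp; it simply follows from the (non-canonical) choice of complex $\filcom V$.
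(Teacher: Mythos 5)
Your proposal is correct and follows the same route as the paper, whose proof simply cites Theorem \ref{cechcomplexlc} together with the Noetherian property; you have merely spelled out the subquotient argument for finiteness, the boundedness of $\filcom V$ for eventual vanishing, and the $H^0_B(V)=V_T\subseteq V$ case. No gaps.
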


\begin{proof}
Both statements follow from Theorem \ref{cechcomplexlc} and the Noetherian property.\\
\end{proof}

\section{Local cohomology and regularity}

\subsection{Definitions}

In this section we consider more deeply the connection between homology and local cohomology. The overall arc of these sections, as well as many of the definitions, is designed to invoke the classical theory. Namely, we will be following the work of Maclagan and Smith \cite{MS}.\\

\begin{definition}
Let $V$ be a finitely generated $\FI^m$-module. The \textbf{Castelnuovo-Mumford regularity}, or \textbf{CM regularity}, is the set of integral vectors $\mathbf{r} \in \Zn^m$ such that:
\begin{enumerate}
\item for $i \geq 0$, $H^i_B(V)_{\mathbf{n}} = 0$ whenever $\mathbf{n} \in \cup_{\mathbf{a} \in \N^m, |\mathbf{a}|=i} \cup_j (\mathbf{r} - \mathbf{a} +e_j + \N^m)$;
\item $\mathbf{r} \in \HD_0(V)$.\\
\end{enumerate}
We denote the CM regularity of $V$ by $\CMreg(V)$. It will be convenient for us to also set $\CMreg_+(V)$ to be the collection of vectors which satisfy the first of the two conditions above. Namely,
\[
\CMreg(V) = \CMreg_+(V) \cap \HD_0(V)
\]
\end{definition}

\begin{remark}
In the classical setting, the second requirement on $\mathbf{r}$ is not necessary. In our setting, however, the homology acyclic modules - i.e. the semi-induced modules - are also local cohomology acyclic (see Theorem \ref{acyclicclass}). As a consequence, the local cohomology modules cannot detect $\HD_0(V)$. What we will find, however, is that they can detect pieces of $\HD_i(V)$ for higher $i$ (see Theorem \ref{fullversion}).\\
\end{remark}

It will be illustrative for us to take a moment and look at an example.\\

\begin{example}
Let $V$ be that $\FI^2$-module given by
\[
V_{(x,y)} =  \begin{cases} k &\text{if $y = 0$}\\ 0 &\text{otherwise.}\end{cases}
\]
whose transition maps are the identity on the $x$-axis, and the zero map elsewhere. Then $V$ is a $B$-torsion module, and subsequently has trivial higher local cohomology groups by Proposition \ref{tortrivial}. It follows that,
\[
\CMreg(V) \cap \N^2 = \{\mathbf{r} \mid V_\mathbf{n} = 0 \text{ if } \mathbf{n} \in \cup_j \mathbf{r} + e_j + \N^2\} \cap \HD_0(V) = \{\mathbf{r} \mid V_\mathbf{n} = 0 \text{ if } \mathbf{n} \in \cup_j \mathbf{r} + e_j + \N^2\} = \{(0,1) + \N^2\}.
\]
The module $V$ admits a presentation
\[
0 \rightarrow K \rightarrow M(0,0) \rightarrow V \rightarrow 0
\]
In other words, $K$ is the module defined on points by
\[
K_{(x,y)} = \begin{cases} k &\text{if $y \neq 0$}\\ 0 &\text{otherwise.}\end{cases}
\]
Applying the local cohomology functor to this presentation, and using work of previous sections, we conclude that $H^{i}_B(V) = H^{i+1}_B(K)$ for all $i$. Therefore,
\[
\CMreg(K) \cap \N^2 = \{\mathbf{r} \mid V_{\mathbf{n}} = 0 \text{ if } \mathbf{n} \in \cup_{j,s} (\mathbf{r} - e_j + e_s +\N^2)\} \cap \HD_0(K) \cap \N^2
\]
It is not hard to see that $\HD_0(K) \cap \N^2 = \{(0,1) + \N^2\}$, and that
\[
\{\mathbf{r} \mid V_{\mathbf{n}} = 0 \text{ if } \mathbf{n} \in \cup_{j,s} (\mathbf{r} - e_j + e_s +\N^2)\} \cap \N^2 = \{(0,2) + \N^2\}.
\]
Thus,
\[
\CMreg(K) \cap \N^2 = \{(0,2) + \N^2\}.
\]
\end{example}

\subsection{Theorem \ref{reglc} for $B$ torsion modules}
Our first goal will be to show Theorem \ref{reglc} for $B$-torsion modules. In this setting, the statement of this theorem reduces to the following. Note that facts which arise in the proof of this theorem will effectively form the base case of an eventual inductive proof of Theorem \ref{reglc}.\\

\begin{theorem} \label{torversion}
Let $V$ be a finitely generated $B$-torsion module. If $\mathbf{c}$ is any integral vector with positive coordinates, and $\mathbf{r}$ is such that $V_{\mathbf{n}} = 0$ whenever $\mathbf{n} \in \cup_j \mathbf{r} + e_j + \N^m$, then there exists a complex
\[
\ldots \to F^{(1)} \rightarrow F^{(0)} \stackrel{\partial}\rightarrow V \rightarrow 0
\]
such that
\begin{enumerate}
\item $F^{(i)}$ is a semi-induced module generated in degree $\mathbf{r} + i\cdot\mathbf{c}$;
\item the homology of the complex is $B$-torsion.\\
\end{enumerate}
\end{theorem}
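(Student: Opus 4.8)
The plan is to build the complex one step at a time, peeling off the topmost layer of $V$ at each stage and then descending to the kernel. The key observation is that $\HD_0(V)$ is controlled by the hypothesis: the condition $V_{\mathbf n}=0$ for $\mathbf n\in\cup_j(\mathbf r+e_j+\N^m)$ says precisely (since $H_0(V)$ is a quotient of $V$) that $\mathbf r\in\HD_0(V)$, so $V$ is generated in degrees $\le\mathbf r$. First I would fix, for each vector $\mathbf d\le\mathbf r$, the $k[\Sn_{\mathbf d}]$-module $H_0(V)_{\mathbf d}$ and form the induced module on it; assembling these over all $\mathbf d$ in the (finite, since $V$ is finitely generated) relevant range and mapping onto $V$ gives a surjection $P\to V\to 0$ with $P$ semi-induced and $\HD_0(P)=\HD_0(V)$. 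However, $P$ is not yet generated in the single degree $\mathbf r$ as the statement demands; to fix this I would replace $P$ by $\Sigma_{\mathbf a}P$ for a suitable $\mathbf a$, or more directly use that an induced module generated in degree $\mathbf d\le\mathbf r$ embeds (as a direct summand, by part (2) of Theorem \ref{shiftthm}, applied coordinatewise) into one generated in degree $\mathbf r$ with semi-induced cokernel, and the cokernel is itself $B$-torsion-free hence contributes nothing to the homology issue. So I obtain $F^{(0)}$ semi-induced, generated exactly in degree $\mathbf r$, with a map $\partial\colon F^{(0)}\to V$ whose cokernel vanishes and whose kernel $K$ I now wish to control.

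The crucial point is to bound $\HD_0(K)$. From the exact sequence $0\to K\to F^{(0)}\to V\to 0$ and the long exact sequence in $\FI^m$-homology we get $H_0(K)\hookrightarrow \text{(something built from }H_1(V)\text{ and }H_0(F^{(0)})\text{)}$; more precisely, since $F^{(0)}$ is semi-induced it is homology-acyclic (Proposition \ref{inducedproperties}(4)), so $H_{i}(K)\cong H_{i+1}(V)$ for $i\ge 1$ and there is an exact sequence $0\to H_1(V)\to H_0(K)\to H_0(F^{(0)})$. The homological degrees of a finitely generated $B$-torsion module are bounded by its torsion degree: $H_i(V)$ is a subquotient of a sum of induced modules generated in degrees that are bounded (this is the content of the resolution in the Proposition following Definition \ref{homdegree}, combined with Proposition \ref{torsionprop}(2)), and the whole point of choosing $\mathbf c$ with \emph{positive} coordinates is that $\mathbf r+\mathbf c$ will dominate $\HD_0(K)$. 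I would verify the inequality $\HD_0(K)\ni \mathbf r+\mathbf c$: the generators of $K$ sit inside $F^{(0)}_{\mathbf n}$ for $\mathbf n$ not too far above $\mathbf r$ — specifically $K$ is generated in degrees $\le \mathbf r+\mathbf c$ provided $\mathbf c$ is large enough coordinatewise, and since we are free to rescale $\mathbf c$ (replacing it by any larger positive vector only weakens the claimed degree, but the statement quantifies "for any $\mathbf c$", so I must be careful) — actually the correct reading is that we get to \emph{choose} the complex depending on $\mathbf c$, and the content is that for the \emph{given} $\mathbf c$ one can always find a semi-induced $F^{(0)}$ generated in degree exactly $\mathbf r$ whose kernel is generated in degrees $\le\mathbf r+\mathbf c$; this is where I would use the freedom in building $F^{(0)}$ (taking $F^{(0)}$ "large enough" — a sum of copies of $M(\mathbf r)$ rather than a minimal cover) so that the relations appear by degree $\mathbf r+\mathbf c$. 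This is the step I expect to be the main obstacle: arranging simultaneously that $F^{(0)}$ is generated in the single prescribed degree $\mathbf r$ and that the syzygy module $K$ has $\HD_0$ bounded by $\mathbf r+\mathbf c$, using only that $V$ is $B$-torsion with $\td(V)$ finite.

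Once the base step is in hand, I would iterate: $K$ need not be $B$-torsion, but writing $0\to K_T\to K\to K_F\to 0$ with $K_F$ $B$-torsion-free, the homology of the eventual complex only needs to be $B$-torsion, so it suffices to resolve $K$ modulo torsion, i.e. to continue the complex so that at the next stage the cokernel of $F^{(1)}\to F^{(0)}$ agrees with $K$ up to $B$-torsion. Concretely I would set $F^{(1)}$ to be a semi-induced module generated in degree $\mathbf r+\mathbf c=\mathbf r+1\cdot\mathbf c$ surjecting onto $K_F$ (or onto a module differing from $K$ by torsion), let $K_1=\ker$, and check inductively that $\HD_0(K_i)$ is bounded by $\mathbf r+(i+1)\mathbf c$ — again because passing to a syzygy shifts homological degree up by a controlled amount and the positivity of $\mathbf c$ provides enough slack. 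The homology of the resulting complex at $F^{(i)}$ is, by construction, the torsion part $K_i/K_{i,F}$ (plus possibly the kernel of the map $K_{i,F}\to F^{(i-1)}$, which one arranges to be zero or torsion), hence $B$-torsion, giving condition (2); condition (1) holds by fiat at each stage. I would record explicitly the bound $\HD_0(K_i)\le\mathbf r+(i+1)\mathbf c$ as the inductive invariant, since this is exactly what makes the degree bookkeeping in the statement work, and it is also the fact that will be reused as the base case in the general proof of Theorem \ref{reglc}.
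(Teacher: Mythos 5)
Your overall shape (iterated covers in degrees $\mathbf{r}+i\mathbf{c}$, tracking kernels, allowing a $B$-torsion error) matches the paper, but two steps do not survive scrutiny. First, you try to make $\partial\colon F^{(0)}\to V$ \emph{surjective} with $F^{(0)}$ generated in the single degree $\mathbf{r}$. That is impossible in general: a module generated in degree $\mathbf{r}$ maps into the submodule $V|_{\geq\mathbf{r}}$ generated by $V_{\mathbf{r}}$, and your proposed repair via Theorem \ref{shiftthm}(2) goes the wrong way --- $\Sigma_j M(W)$ is generated in degrees $\mathbf{d}$ and $\mathbf{d}-e_j$, so shifting lowers generation degrees; it cannot promote a generator in degree $\mathbf{d}<\mathbf{r}$ to degree $\mathbf{r}$ (indeed $M(\mathbf{r})_{\mathbf{d}}=0$, so no sum of copies of $M(\mathbf{r})$ can surject onto $M(\mathbf{d})$). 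The paper's resolution is simply not to demand surjectivity: take $F^{(0)}=M(V_{\mathbf{r}})$ mapping onto $V|_{\geq\mathbf{r}}$ only, and observe that the cokernel $V/V|_{\geq\mathbf{r}}$ is $B$-torsion, which is all that condition (2) of the theorem requires; the same device $K^{(i)}\mapsto K^{(i)}|_{\geq \mathbf{r}+i\mathbf{c}}$ is used at every stage.

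Second, and more seriously, the central quantitative claim --- that the kernel $K^{(1)}$ satisfies $\mathbf{r}+\mathbf{c}\in\HD_0(K^{(1)})$, and inductively $\mathbf{r}+(i+1)\mathbf{c}\in\HD_0(K^{(i+1)})$ --- is exactly the point you flag as ``the main obstacle,'' and your proposed mechanism does not close it. Enlarging $F^{(0)}$ to a non-minimal sum of copies of $M(\mathbf{r})$ only adds trivial syzygies in degree $\mathbf{r}$; it cannot force the essential relations of $V$ to appear by degree $\mathbf{r}+\mathbf{c}$, and the Proposition following Definition \ref{homdegree} together with $\td(V)$ gives no quantitative link between the vanishing locus of $V$ and that of $H_1(V)$. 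The missing ingredient is Theorem \ref{koszulhom}: since the Koszul complex computes homology, $H_{i+1}(W)_{\mathbf{n}}$ is a subquotient of a direct sum of terms $W_{\mathbf{n}'}$ where $\mathbf{n}'$ is obtained from $\mathbf{n}$ by deleting a total of $i+1$ elements; because $\mathbf{c}$ has strictly positive coordinates, if $\mathbf{n}$ is at least one positive step from $\mathbf{r}+(i+1)\mathbf{c}$ then every such $\mathbf{n}'$ is still at least one positive step from $\mathbf{r}$, where $V$ vanishes by hypothesis. Combining this with $H_0(K^{(i+1)})\cong H_1(K^{(i)}|_{\geq\mathbf{r}+i\mathbf{c}})$ (homology-acyclicity of the induced $F^{(i)}$, Proposition \ref{inducedproperties}(4)) and the chain $H_1(K^{(i)})_{\mathbf{n}}\cong H_2(K^{(i-1)})_{\mathbf{n}}\cong\cdots\cong H_{i+1}(V)_{\mathbf{n}}=0$ is what actually proves your inductive invariant; in your write-up that invariant is asserted (``positivity of $\mathbf{c}$ provides enough slack'') rather than derived, so the argument as it stands has a genuine gap.
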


To prove this theorem, we will need an alternative characterization of homology. The following definition is motivated by work of Church and Ellenberg, as well as work of Gan and the first author \cite{GL2}. It is perhaps most natural to describe this construction using a different version of $\FI^m$. Note that $\FI^m$ is equivalent to the category whose objects are $m$-fold products of finite sets
\[
S_1 \times \ldots \times S_m
\]
and whose morphisms are $m$-fold products of injections.\\

\begin{definition}
For a given non-negative integer $a$, and an $\FI^m$-module $V$, we define a new $\FI^m$-module
\[
(\Sigma_{-a}V)_{S_1 \times \ldots \times S_m} = \oplus_{T_j \subseteq S_j, \sum_j |T_j| = a} V(S_1 \setminus T_1 \times \ldots \times S_m \setminus T_m) \otimes_k \epsilon_{T_1 \times \ldots \times T_m}
\]
where $\epsilon_{T_1 \times \ldots \times T_m} = \epsilon_{T_1} \boxtimes \ldots \boxtimes \epsilon_{T_m}$ and $\epsilon_T$ is the sign representation of $\Sn_T$. Given a $m$-fold product of injections $f_1 \times \ldots f_m:S_1 \times \ldots \times S_m \rightarrow S'_1 \times \ldots \times S'_m$ we may define the induced map on $\Sigma_{-a}V$ by
\begin{eqnarray*}
&(\Sigma_{-a}V)(f_1\times\ldots\times f_m) (v \otimes (t_{1,1} \wedge \ldots \wedge t_{1,|T_1|}) \otimes \ldots \otimes (t_{m,1} \wedge \ldots \wedge t_{m,|T_m|})) =\\
&V((f_1 \times \ldots \times f_m)|_{S_1 \setminus T_1 \times \ldots \times S_m \setminus T_m})(v) \otimes (f_1(t_{1,1}) \wedge \ldots \wedge f_1(t_{1,|T_1|})) \otimes \ldots \otimes (f_m(t_{m,1}) \wedge \ldots \wedge f_m(t_{m,|T_m|})),
\end{eqnarray*}
where $T_j = \{t_{j,1},\ldots,t_{j,|T_j|}\} \subseteq S_j$ for each $j$, and $v \in V(S_1 \setminus T_1 \times \ldots \times S_m \setminus T_m)$.\\

Next we claim that the collection of $S_{-a}V$, with $a$ varying among positive integers, form an complex of $\FI^m$-modules. Indeed, we can define a map
\[
V(S_1 \setminus T_1 \times \ldots \times S_m \setminus T_m) \otimes_k \epsilon_{T_1 \times \ldots \times T_m} \rightarrow S_{-(a-1)}V(S_1 \times \ldots \times S_m)
\]
for any given $T_1 \times \ldots \times T_m \subseteq S_1 \times \ldots \times S_m$, with $\sum_j |T_j| = a$ and a chosen labeling $T_1 = \{t_{1,1},\ldots T_{1,|T_1|}\}$, $\ldots$, $T_m = \{t_{m,a-|T_m| + 1},\ldots, t_{m,a}\}$ by setting
\begin{align*}
& v \otimes (t_{1,1} \wedge \ldots \wedge t_{1,|T_1|}) \otimes \ldots \otimes (t_{m,a-|T_m|+1} \wedge \ldots \wedge t_{m,|T_m|}) \mapsto\\
& \sum_j (-1)^j\iota_j(v) \otimes (t_{1,1} \wedge \ldots \wedge t_{1,|T_1|}) \otimes \ldots \otimes (t_{m,a-|T_m|+1} \wedge \ldots \wedge t_{m,|T_m|})
\end{align*}
where the wedge products in the $j$-th summand of the right hand side are missing the term $t_{l,j}$, and $\iota_j$ is the map which adds $t_{l,j}$ to $S_l$. In other words, the differentials in the complex $S_{-\bullet}V$ are defined by all possible ways to add an element from the $T_j$ back into the $S_j$.\\

The complex $S_{-\bullet}V$ is known as the \textbf{Koszul complex} associated to $V$.
\end{definition}

One key fact about the Koszul complex is that is actually computes the homology of $V$. The following theorem was proven independently in \cite{CE} and \cite{GL2} for $\FI$-modules. The proof in our setting is similar, and we omit it.\\

\begin{theorem}\label{koszulhom}
If $V$ is an $\FI$-module, then for all $i \geq 0$
\[
H_i(V) \cong H_i(S_{-\bullet}V).
\]
\end{theorem}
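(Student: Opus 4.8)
The plan is to reduce the statement to the classical fact that the Koszul complex computes $\Tor$, suitably packaged in the category-theoretic language of $\FI^m$-modules. Recall from Section 2.3 that $H_i(V) = \Tor_i^{k\FI^m}(k\FI^m/\mi, V)$, and that $H_0(V)_{\mathbf n} = V_{\mathbf n}/V_{<\mathbf n}$ is the functor that kills the images of all transition maps. The heart of the argument is to show that $S_{-\bullet}V$ is a functorial construction which, when fed an induced (equivalently, homology-acyclic) module, becomes a resolution — or more precisely a complex with homology only in degree zero equal to $H_0$ — so that hyperhomology / a spectral-sequence comparison forces $H_i(S_{-\bullet}V) \cong H_i(V)$ in general.

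First I would record that $\Sigma_{-a}$ and the differentials are exact functors in $V$ (they are built pointwise out of direct sums of evaluations, tensored with sign characters, and the differential is natural), so $V \mapsto S_{-\bullet}V$ is an exact functor from $\FI^m$-modules to complexes of $\FI^m$-modules. Consequently, given any resolution $\cdots \to P^{(1)} \to P^{(0)} \to V \to 0$ by semi-induced modules (which exists by Proposition \ref{inducedproperties}(2) and the Noetherian property, extended leftward), one gets a double complex $S_{-\bullet}P^{(\bullet)}$ with two spectral sequences converging to the same total homology. One filtration computes $H_\bullet(S_{-\bullet}V)$ provided $S_{-\bullet}$ is exact on each $P^{(i)}$-resolution column — but actually the cleaner route is: one spectral sequence has $E_1$-page the complexes $S_{-\bullet}(P^{(i)})$ and, if we know $S_{-\bullet}$ is a resolution of $H_0$ on semi-induced modules, its $E_2$ is $H_i(V)$ concentrated appropriately; the other filtration gives $H_\bullet(S_{-\bullet}V)$ directly since each $S_{-a}P^{(\bullet)}$ is exact (again by exactness of $\Sigma_{-a}$ applied to the resolution). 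Comparing yields the isomorphism.

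So the real content is the base case: \textbf{if $V = M(W)$ is induced, then $S_{-\bullet}V$ has homology $H_0(V)$ in degree $0$ and zero in positive degrees.} For $M(\mathbf r)$ this is a direct computation — the complex $S_{-\bullet}M(\mathbf r)$, evaluated at $[\mathbf n]$, decomposes along the combinatorics of injections $[\mathbf r] \hookrightarrow [\mathbf n]$, and the Koszul differential is (a tensor product over the $m$ coordinates of) the standard simplicial/Koszul differential on the chain complex of subsets, whose only homology is in the top or bottom slot. Concretely one can identify $S_{-\bullet}M(\mathbf r)_{\mathbf n}$ with a tensor product of $m$ copies of the (shifted, sign-twisted) reduced chain complex of a simplex on $n_j - r_j$ vertices — acyclic except in one degree — and a Künneth argument finishes it; the surviving homology matches $H_\bullet(M(\mathbf r))$, which is $M(\mathbf r)$'s generator space in degree $0$ and nothing else since induced modules are homology-acyclic. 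The general $M(W)$ follows since $M(W)$ is a direct summand of a sum of copies of $M(\mathbf r)$ (as $W$ is a summand of a free $k[\Sn_{\mathbf r}]$-module), and everything in sight is additive.

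\textbf{Main obstacle.} The bookkeeping in the base case is where I expect the genuine work to lie: correctly matching the sign conventions in the definition of the differential (the $(-1)^j$ twisted by the positions of the re-inserted elements across $m$ separate wedge factors) with a known acyclic complex, and making the Künneth-over-$m$-coordinates identification precise enough that the surviving homology lands in the right degree. The spectral sequence comparison itself is routine once exactness of $\Sigma_{-a}$ is in hand, and the passage from $M(\mathbf r)$ to arbitrary $M(W)$ to arbitrary semi-induced modules (via filtrations and the long exact sequence) is formal. One should also note the statement is phrased for $\FI$-modules but the intended scope is $\FI^m$; I would state and prove the $\FI^m$ version, as all the inputs (Theorem \ref{inducedproperties}, the Koszul construction) are already set up in that generality.
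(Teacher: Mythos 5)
The paper itself does not write out a proof of Theorem \ref{koszulhom}: it cites \cite{CE} and \cite{GL2} and asserts the $\FI^m$ case is similar. What you propose is essentially the standard argument underlying those references: each $S_{-a}$ is an exact functor, the Koszul complex is acyclic in positive degrees on induced modules, and a hyperhomology (equivalently, universal $\delta$-functor) comparison then transfers this to all modules. Your base-case computation is also the right one: $S_{-\bullet}M(\mathbf{r})_{\mathbf{n}}$ splits as a direct sum over injections $[\mathbf{r}]\hookrightarrow[\mathbf{n}]$ of tensor products (over the $m$ coordinates) of sign-twisted augmented chain complexes of simplices on the complements of the images, which are contractible unless all complements are empty; this recovers exactly $H_\bullet(M(\mathbf{r}))$. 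Your observation that the statement should be read for $\FI^m$-modules, and your identification of the degree-zero homology with $H_0(V)$ (every transition map factors through one-step maps), are both correct.

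The one step that is wrong as written is the passage from $M(\mathbf{r})$ to a general induced module $M(W)$: you assert $M(W)$ is a direct summand of a sum of copies of $M(\mathbf{r})$ because ``$W$ is a summand of a free $k[\Sn_{\mathbf{r}}]$-module.'' That holds only when $W$ is projective over $k[\Sn_{\mathbf{r}}]$, which fails in precisely the generality the paper works in (a general Noetherian $k$; for instance $k=\Zn$ and $W$ the trivial $\Sn_2$-module is not a summand of a free module, being non-projective). Since your resolution of $V$ has semi-induced terms whose filtration quotients are $M(W)$ for arbitrary $W$, your setup genuinely needs the general base case. Two easy repairs: (i) resolve $V$ by free $\FI^m$-modules, i.e.\ direct sums of the projective modules $M(\mathbf{n})$, so that only the $M(\mathbf{r})$ computation plus additivity is needed, and the column filtration of the double complex still yields $H_i(V)$ because the terms are projective (this also removes any finiteness hypothesis, as the theorem is stated for arbitrary $V$); or (ii) prove the base case for all $W$ by noting $S_{-\bullet}M(W)\cong S_{-\bullet}M(\mathbf{r})\otimes_{k[\Sn_{\mathbf{r}}]}W$, and that the contracting homotopy of the positive-degree part of $S_{-\bullet}M(\mathbf{r})_{\mathbf{n}}$ (coning off a vertex in each simplex factor) commutes with the right $\Sn_{\mathbf{r}}$-action, which only permutes the injections; hence exactness in positive degrees survives $-\otimes_{k[\Sn_{\mathbf{r}}]}W$. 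With either fix, your argument is complete and agrees in substance with the proofs in the cited sources.
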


This theorem is the major piece needed to prove Theorem \ref{torversion}. Recall that if $V$ is an $\FI^m$-module, and $\mathbf{n} \in \N^m$, then $V|_{\geq \mathbf{n}}$ is the submodule of $V$ generated by $V_{\mathbf{n}}$.\\

\begin{proof}[Proof of Theorem \ref{torversion}]
Let $V$ be a finitely generated $B$-torsion $\FI^m$-module, let $\mathbf{r} \in \CMreg(V)$ and $\mathbf{c}$ be any integral vector with positive coordinates. By definition of $V|_{\geq \mathbf{r}}$, there is an exact sequence
\begin{eqnarray}
0 \rightarrow K^{(1)} \rightarrow M(V_{\mathbf{r}}) \rightarrow V|_{\geq \mathbf{r}} \rightarrow 0 \label{firstsyz}
\end{eqnarray}
Set $F^{(0)} = M(V_{\mathbf{r}})$.\\

We claim that $\mathbf{r} + \mathbf{c} \in \HD_0(K^{(1)})$. Indeed, the exact sequence (\ref{firstsyz}) implies that $H_1(V|_{\geq \mathbf{r}}) = H_0(K^{(1)})$. Using Theorem \ref{koszulhom}, one sees that $H_1(V|_{\geq \mathbf{r}})_{\mathbf{r}+\mathbf{c}} = 0$ as it is a subquotient of a direct sum of modules of the form
\[
(V|_{\geq \mathbf{r}})([r_1+c_1] - S_1 \times \ldots \times [r_m+c_m]-S_m)
\]
where $\sum_j |S_j| = 1$ and $S_j \subseteq [r_j+c_j]$. Because $c_j$ is positive for all $j$, $r_j+c_j - |S_j| \geq r_j$ with this inequality being strict for at least one choice of $j$. By the definition of $\CMreg(V)$ it follows that
\[
(V|_{\geq \mathbf{r}})([r_1+c_1] - S_1 \times \ldots \times [r_m+c_m]-S_m) = 0
\]
as desired. As a consequence, $\mathbf{r} + \mathbf{c} \in \HD_0(K^{(1)})$ and the quotient $K^{(1)} / (K^{(1)}|_{\geq \mathbf{r} + \mathbf{c}}$) is $B$-torsion. Set $F^{(1)} = M(K^{(1)}_{\mathbf{r} + \mathbf{c}})$.\\

Proceeding inductively, assume that we have constructed an exact sequence
\[
0 \rightarrow K^{(i+1)} \rightarrow F^{(i)} \rightarrow K^{(i)}|_{\geq \mathbf{r} + i\cdot \mathbf{c}} \rightarrow 0
\]
where $\mathbf{r} + i\cdot \mathbf{c} \in \HD_0(K^{(i)})$, and $F^{(i)} = M(K^{(i)}_{\mathbf{r} + i\cdot \mathbf{c}})$. By construction we know that
\[
H_1(K^{(i)}|_{\geq \mathbf{r} + i\cdot \mathbf{c}}) \cong H_0(K^{(i+1)}).
\]
Let $\mathbf{n} \in \N^m$ be such that $\mathbf{n} - (\mathbf{r} + (i+1)\cdot \mathbf{c}) \in \N^m$ is non-zero. We have to show
\[
H_1(K^{(i)}|_{\geq \mathbf{r} + i\cdot \mathbf{c}})_{\mathbf{n}} = 0
\]
It follows from Theorem \ref{koszulhom}, the definition of $\mathbf{n}$, the fact that $\mathbf{r} + i\cdot \mathbf{c} \in \HD_0(K^{(i)})$, and that
\[
H_1(K^{(i)}|_{\geq \mathbf{r} + i\cdot \mathbf{c}})_{\mathbf{n}} \cong H_1(K^{(i)})_{\mathbf{n}}.
\]

By induction we know that
\[
H_1(K^{(i)})_{\mathbf{n}} \cong H_2(K^{(i-1)}|_{\geq \mathbf{r}+(i-1)\cdot \mathbf{c}})_{\mathbf{n}}
\]
and using the same argument as above we conclude $H_1(K^{(i)})_{\mathbf{n}} \cong H_2(K^{(i-1)})_{\mathbf{n}}$. Inductively we eventually conclude that
\[
H_1(K^{(i)})_\mathbf{n} \cong H_{i+1}(V|_{\geq \mathbf{r}})_{\mathbf{n}} \cong H_{i+1}(V)_{\mathbf{n}} = 0,
\]
once again using our choice of $\mathbf{n}$. This concludes the proof.\\
\end{proof}

\subsection{The proof of Theorem \ref{reglc}}

We begin by stating a theorem which implies Theorem \ref{reglc}.\\

\begin{theorem}\label{fullversion}
Let $V$ be a finitely generated $\FI^m$-module, $i\geq 1$, and let $\mathbf{c}_i \in \Zn^m$ be any vector such that $\mathbf{c}_i - \mathbf{a} \in \CMreg_+(V)$ for all vectors $\mathbf{a} \in \N^m$ such that $|\mathbf{a}| = i$. Then one has,
\[
\mathbf{c}_i \in \HD_i(V)
\]
\end{theorem}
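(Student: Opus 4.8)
The plan is to build, for a fixed such $\mathbf{c}_i$, a resolution of $V$ by semi-induced modules whose $i$-th term is generated in degrees from which one can read off that $\mathbf{c}_i \in \HD_i(V)$. The starting point is Theorem \ref{cechcomplexlc}, which identifies $H^{i}_B(V)$ with $H^{i-1}(\filcom V)$, together with the acyclic classification (Theorem \ref{acyclicclass}) and the $B$-torsion base case Theorem \ref{torversion}. First I would reduce to the two extreme cases. Using Proposition \ref{torsionprop} we have the short exact sequence $0 \to V_T \to V \to V_F \to 0$ with $V_T$ finitely generated $B$-torsion and $V_F$ finitely generated $B$-torsion-free; the long exact sequence in local cohomology shows $H^j_B(V_F) \cong H^j_B(V)$ for $j \geq 1$ and gives control on $H^0$, so $\CMreg_+$ and $\HD$ of $V$ are squeezed between those of $V_T$ and $V_F$. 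The $B$-torsion piece is handled by Theorem \ref{torversion}: if $\mathbf{r} := \mathbf{c}_i - (i\cdot\mathbf{c})$ lies in the relevant regularity set (which the hypothesis on $\mathbf{c}_i$ guarantees for suitable $\mathbf{c}$), then $V_T$ has a semi-induced resolution $\cdots \to F^{(i)} \to \cdots \to V_T \to 0$ with $F^{(i)}$ generated in degree $\mathbf{r} + i\cdot\mathbf{c} = \mathbf{c}_i$ and $B$-torsion homology, and then the third part of the proposition on semi-induced resolutions together with $\HD_0(F^{(i)}) = \{\mathbf{c}_i\} + (\text{positive steps})$ yields $\mathbf{c}_i \in \HD_i(V_T)$.

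For the $B$-torsion-free case I would argue by induction on $i$. The module $V_F$ embeds in a semi-induced module via $0 \to V_F \to F^{(0)} \to Q \to 0$, where $F^{(0)} = \Sigma_{\mathbf{a}}V_F$ for $\mathbf{a}$ large (Theorem \ref{shiftthm}(4)), and $Q$ is again finitely generated. The long exact sequences in local cohomology and in $\FI^m$-homology relate the invariants of $V_F$ and $Q$: because $F^{(0)}$ is both local-cohomology acyclic and $\FI^m$-homology acyclic, one gets dimension-shift isomorphisms $H^{j}_B(Q) \cong H^{j+1}_B(V_F)$ and, after controlling $\HD_0(F^{(0)})$ via part (3) of Theorem \ref{shiftthm}, bounds of the form $\HD_{j}(Q) \supseteq \HD_{j+1}(V_F) - (\text{shift})$. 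The key bookkeeping step is to check that the hypothesis "$\mathbf{c}_i - \mathbf{a} \in \CMreg_+(V)$ for all $|\mathbf{a}| = i$" for $V = V_F$ translates, under the dimension shift $H^\bullet_B(Q) \cong H^{\bullet+1}_B(V_F)$, into the analogous hypothesis for $Q$ at homological level $i-1$ (this is exactly where the union over all $\mathbf{a}$ with $|\mathbf{a}| = i$ in the definition of $\CMreg_+$ does its job — it is designed so that "$\CMreg_+$ at level $i$ for $V_F$" becomes "$\CMreg_+$ at level $i-1$ for $Q$"). Then the inductive hypothesis gives $\mathbf{c}_i \in \HD_{i-1}(Q)$ (possibly up to a harmless shift that must be absorbed into the choice of $\mathbf{c}$), and the homology long exact sequence of $0 \to V_F \to F^{(0)} \to Q \to 0$ — in which $H_j(F^{(0)}) = 0$ for $j \geq 1$ — gives $H_i(V_F) \cong H_{i-1}(Q)$ in the relevant degrees, hence $\mathbf{c}_i \in \HD_i(V_F)$. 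The base case $i = 1$ comes directly from $H_1(V_F) \cong H_0(Q)$ plus $\mathbf{r} \in \HD_0(Q)$, which is the $\CMreg_+$-at-level-$1$ condition unwound via Theorem \ref{torversion}'s method (the Koszul complex computation of Theorem \ref{koszulhom}).

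Finally I would reassemble: combining $\mathbf{c}_i \in \HD_i(V_T)$ and $\mathbf{c}_i \in \HD_i(V_F)$ through the homology long exact sequence of $0 \to V_T \to V \to V_F \to 0$ (where $H_i(V_T) \to H_i(V) \to H_i(V_F) \to H_{i-1}(V_T)$) shows $H_i(V)_{\mathbf{n}} = 0$ for $\mathbf{n}$ at least one positive step from $\mathbf{c}_i$, i.e. $\mathbf{c}_i \in \HD_i(V)$. To deduce Theorem \ref{reglc} from this, one takes $\mathbf{r} \in \CMreg_+(V) \cap \HD_0(V)$, sets $\mathbf{c}_i = \mathbf{r} + i\cdot\mathbf{c}$, verifies that $\mathbf{c}_i - \mathbf{a} = \mathbf{r} + i\cdot\mathbf{c} - \mathbf{a} \in \CMreg_+(V)$ for every $|\mathbf{a}| = i$ (this is literally the union appearing in the definition of $\CMreg_+$, since $i\cdot\mathbf{c} - \mathbf{a}$ with $|\mathbf{a}| = i$ ranges over a set of the form "$\mathbf{a}' - \mathbf{a}$, $|\mathbf{a}'| = i$" translated), concludes $\mathbf{r} + i\cdot\mathbf{c} \in \HD_i(V)$ for all $i$, and builds the complex term by term from the resulting semi-induced resolutions with $B$-torsion homology.

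\textbf{Main obstacle.} The delicate point is the inductive step's bookkeeping: tracking precisely how the shift by $\mathbf{a}$ (from replacing $V_F$ by $\Sigma_{\mathbf{a}}V_F$) and the dimension shift in local cohomology interact with the union-over-$|\mathbf{a}|=i$ structure of $\CMreg_+$, and verifying that the residual shifts can always be absorbed because $\mathbf{c}$ has strictly positive coordinates. One must be careful that $\Sigma_{\mathbf{a}}$ does not merely preserve but can only \emph{enlarge} $\HD_0$ (Theorem \ref{shiftthm}(3)), so the inequalities all point the right way; getting every inclusion of cones of the form $\mathbf{x} + e_j + \N^m$ oriented correctly, and handling the boundary/$\Zn^m$-versus-$\N^m$ subtlety flagged in the remarks, is where the real work lies.
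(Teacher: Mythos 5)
There is a genuine gap in the torsion-free step, and it is structural rather than cosmetic. From the exact sequence $0 \to V_F \to F^{(0)} \to Q \to 0$ with $F^{(0)}$ semi-induced (so $H_s(F^{(0)}) = 0$ for $s \geq 1$ and $H^s_B(F^{(0)}) = 0$ for all $s$), the long exact sequence in $\FI^m$-homology gives $H_i(V_F) \cong H_{i+1}(Q)$ for $i \geq 1$ --- the homological index goes \emph{up} when you pass to the quotient $Q$, not down. Your claimed isomorphism $H_i(V_F) \cong H_{i-1}(Q)$ (and the corresponding claim that the regularity hypothesis at level $i$ for $V_F$ becomes the hypothesis at level $i-1$ for $Q$) has the shift reversed: the dimension shift $H^s_B(Q) \cong H^{s+1}_B(V_F)$ translates into $\CMreg_+(V_F) - e_j \subseteq \CMreg_+(Q)$ for every $j$, and what one can verify (writing $\mathbf{a} = \mathbf{a}' + e_j$ with $|\mathbf{a}| = i+1$, $|\mathbf{a}'| = i$) is precisely the hypothesis of the theorem for $Q$ at level $i+1$. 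Consequently an induction on $i$ cannot close: controlling $H_i(V_F)$ requires knowledge of $H_{i+1}(Q)$, so the homological index grows at each step. The paper resolves this by inducting on a different quantity, namely the largest $d$ with $H^d_B(V) \neq 0$ (finite by Corollary \ref{finite}); this strictly decreases when passing from $V_F$ to $Q$ because $H^s_B(Q) \cong H^{s+1}_B(V_F) \cong H^{s+1}_B(V)$, and the growth of the homological index from $i$ to $i+1$ is harmless since the theorem is quantified over all $i \geq 1$. Your outline has no decreasing parameter, so as written the induction does not terminate.

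A secondary, fixable point: for the torsion piece you invoke Theorem \ref{torversion} with $\mathbf{r} = \mathbf{c}_i - i\cdot\mathbf{c}$, but the hypothesis on $\mathbf{c}_i$ only gives vanishing of $V_T = H^0_B(V)$ on $\cup_{|\mathbf{a}|=i}\cup_j(\mathbf{c}_i - \mathbf{a} + e_j + \N^m)$, whereas that application would require vanishing on the larger cone $\cup_j(\mathbf{c}_i - i\cdot\mathbf{c} + e_j + \N^m)$; for $m \geq 2$ no admissible $\mathbf{a}$ satisfies $\mathbf{a} \geq i\cdot\mathbf{c}$, so this is not implied. The paper's route is more direct and avoids the issue: by the Koszul complex (Theorem \ref{koszulhom}), $H_i(V_T)_{\mathbf{n}}$ is a subquotient of a direct sum of values of $V_T$ at degrees obtained from $\mathbf{n}$ by deleting a total of $i$ elements, and each such degree lies at least one positive step beyond some $\mathbf{c}_i - \mathbf{a}$ with $|\mathbf{a}| = i$ whenever $\mathbf{n}$ is at least one positive step beyond $\mathbf{c}_i$; the regularity hypothesis then kills these values, giving $\mathbf{c}_i \in \HD_i(V_T)$ directly. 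Your final reassembly through the long exact sequence for $0 \to V_T \to V \to V_F \to 0$, and the deduction of Theorem \ref{reglc} from the present statement, are in line with the paper.
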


To see why this theorem implies Theorem \ref{reglc}, one looks towards the proof of Theorem \ref{torversion}. Indeed, one can see that the key ingredient that makes the induction work in that argument is the fact that
\[
H_1(K^{(i)})_\mathbf{n} \cong H_{i+1}(V|_{\geq \mathbf{r}})_{\mathbf{n}} \cong H_{i+1}(V)_{\mathbf{n}} = 0
\]
whenever $\mathbf{n}$ is at least one positive step from $\mathbf{r} + (i+1) \cdot \mathbf{c}$. It follows that the proof of \ref{torversion} can be replicated to prove Theorem \ref{reglc} so long as we have Theorem \ref{fullversion}.\\

\begin{proof}[Proof of Theorem \ref{fullversion}]
Fix $\mathbf{c}_i \in \N^m$ as in the statement of the theorem. We will proceed by induction on the largest $d$ such that $H^d_B(V) \neq 0$. This quantity is finite by Corollary \ref{finite}. Note that if this quantity is 0, then the theorem follows by the proof of Theorem \ref{torversion} and Theorem \ref{acyclicclass}. Otherwise assume that $V$ is not local cohomology acyclic and recall there is an exact sequence
\begin{eqnarray*}
0 \rightarrow V_T \rightarrow V \rightarrow V_F \rightarrow 0\label{exactsequence}
\end{eqnarray*}
where $V_T$ is $B$-torsion, and $V_F$ is $B$-torsion-free. This yields exact sequences for all $s \geq 0$
\[
H_s(V_T) \rightarrow H_s(V) \rightarrow H_s(V_F).
\]
It follows that $\mathbf{c}_i \in \HD_i(V)$ so long as $\mathbf{c}_i \in \HD_i(V_T) \cap \HD_i(V_F)$. On the other hand, we have that $V_T = H^0_B(V)$ and therefore $\mathbf{c}_i \in \HD_i(V_T)$ by definition of $\mathbf{c}_i$ as well as Theorem \ref{koszulhom}. It remains to argue that $\mathbf{c}_i \in \HD_i(V_F)$.\\

The exact sequence (\ref{exactsequence}) implies that
\[
H^s_B(V) = H^s_B(V_F)
\]
whenever $s \geq 1$. Note that this also implies that
\[
\CMreg_+(V) \subseteq \CMreg_+(V_F).
\]
Theorem \ref{shiftthm} tells us that there is an exact sequence
\begin{eqnarray*}
0 \rightarrow V_F \rightarrow F \rightarrow Q \rightarrow 0 \label{exactsequence2}
\end{eqnarray*}
Where $F$ is semi-induced. An induction implies that $\mathbf{c}'_i \in \HD_i(Q)$ whenever $\mathbf{c}'_i$ satisfies that $\mathbf{c}'_i - \mathbf{a} \in \CMreg(Q)$ for all $\mathbf{a} \in \N^m$ with $|\mathbf{a}| = i$. However, (\ref{exactsequence2}) tells us that $H^s_B(Q) \cong H^{s+1}_B(V_F)$ for $s \geq 0$ which implies that $\CMreg_+(V_F) \subseteq \cap_j (\CMreg_+(Q) + e_j) $. Once again we may leverage (\ref{exactsequence2}) to find
\[
H_i(V_F) \cong H_{i+1}(Q),
\]
for $i \geq 1$.\\

Let $\mathbf{a} \in \N^m$ satisfies $|\mathbf{a}| = i+1$. Then we can find some index $j$ such that $a_j \neq 0$. Writing $\mathbf{a} = \mathbf{a}'+e_j$ we have,
\[
\mathbf{c}_i - \mathbf{a} = (\mathbf{c}_i - \mathbf{a}') - e_j \in \CMreg_+(V) - e_j \subseteq \CMreg_+(V_F) - e_j \subseteq \CMreg_+(Q),
\]
by definition of $\mathbf{c}_i$. By induction we conclude that $\mathbf{c}_i \in \HD_{i+1}(Q)$ and so $\mathbf{c}_i \in \HD_i(V_F)$. This concludes the proof.\\
\end{proof}

\begin{remark}\label{technicalissue}
It was critical in the above proof that $\CMreg_+(V_F) \subseteq \cap_j (\CMreg_+(Q) + e_j)$. For this statement to be true, it is necessary for the CM regularity include vectors in $\Zn^m - \N^m$.\\
\end{remark}

\end{document}